\documentclass[12pt]{amsart}
\setlength{\textheight}{23cm} \setlength{\textwidth}{16cm}
\setlength{\oddsidemargin}{0cm} \setlength{\evensidemargin}{0cm}
\setlength{\topmargin}{0cm}

\usepackage{amsmath,amsfonts,amssymb}
\bibliographystyle{thebibliography}

\numberwithin{equation}{section}

\newtheorem{theorem}{Theorem}[section]
\newtheorem{lemma}[theorem]{Lemma}

\newtheorem{proposition}[theorem]{Proposition}

\newtheorem{definition}[theorem]{Definition}

 \DeclareMathOperator{\Tr}{Tr}
\DeclareMathOperator{\Ad}{Ad} 
\DeclareMathOperator{\id}{id}

\DeclareMathOperator{\End}{End} \DeclareMathOperator{\Rea}{Re}
\DeclareMathOperator{\Ima}{Im}

\title [Weyl calculus on the Fock space...]{ Weyl calculus on the Fock space and Stratonovich-Weyl correspondence for Heisenberg motion groups}
\author {Benjamin Cahen}
\address{Universit\'e de Lorraine, Site de Metz, UFR-MIM,
D\'epartement de math\'ematiques,
B\^atiment A,
3 rue Augustin Fresnel, BP 45112,
57073 METZ Cedex 03, France.}
\email{benjamin.cahen@univ-lorraine.fr}

\subjclass[2000]{22E45; 22E70; 22E20; 81S10; 81R30.} \keywords{Weyl calculus;
Stratonovich-Weyl correspondence; Berezin quantization; Heisenberg group; Heisenberg motion group; 
reproducing kernel Hilbert space; coherent states; 
Bargmann-Fock representation.}

\begin{document}

\maketitle

\begin{abstract}
We construct a Stratonovich-Weyl correspondence for each generic representation of a Heisenberg motion group by using the Weyl calculus on the Fock space.
\end{abstract}

\vspace{1cm}

\section {Introduction} \label{sec:intro}

The notion of Stratonovich-Weyl correspondence was first introduced in
\cite{St} as a generalization of the classical Weyl correspondence between functions on  ${\mathbb R}^{2n}$ and operators
on $L^2( {\mathbb R}^{n})$ to the setting of a Lie group acting on a homogeneous space. 
The systematic study of Stratonovich-Weyl
correspondences  began with the work of J.M. Gracia-Bond\`{i}a, J.C.
V\`{a}rilly and their co-workers, see in particular \cite{GBV}, \cite{FGBV},
\cite{CGBV} and \cite{BM}. A good review of  Stratonovich-Weyl correspondences and
related topics can be found in \cite{GB}. 

 The following definition is taken from \cite{GB}, see also \cite{GBV}.

\begin{definition} Let $G$ be a Lie group and
$\pi$ be a unitary representation of $G$ on a Hilbert space $\mathcal
H$. Let $M$ be a homogeneous $G$-space and let $\mu$ be
a $G$-invariant measure on $M$. Then a
Stratonovich-Weyl correspondence for the triple $(G,\pi, M)$ is an
isomorphism ${\mathcal W}$ from a vector space of operators on $\mathcal H$ to
a vector space of functions on $M$ satisfying the following
properties:

\begin{enumerate}

\item the function ${\mathcal W}(A^{\ast})$ is the complex-conjugate of ${\mathcal W}(A)$;

\item Covariance: we have ${\mathcal W}(\pi (g)\,A\,\pi (g)^{-1})(x)={\mathcal W}(A)(g^{-1}\cdot x)$;

\item Traciality: we have
\begin{equation*}\int_M\,{\mathcal W}(A)(x){\mathcal W}(B)(x)\,d\mu (x)=\Tr(AB).
\end{equation*}

\end{enumerate} \end{definition}

Note that Stratonovich-Weyl
correspondences are particular cases of the invariant symbolic calculi  introduced
and studied by J. Arazy and H. Upmeir, especially for symmetric domains, see \cite{AU1}, \cite{AU2}, \cite{AU3}.

In the previous definition, $M$ is generally taken to be a coadjoint orbit of $G$ which is associated with $\pi$ by the Kirillov-Kostant
method of orbits \cite{Kir}, \cite{Ko}. A basic example is then the case when $G$ is the $(2n+1)$-dimensional Heisenberg group. Each non-degenerate coadjoint orbit $M$ of $G$ is diffeomorphic to ${\mathbb R}^{2n}$ and is associated with  a unitary irreducible representation $\pi$  of $G$ on $L^2({\mathbb R^n})$. In this case,
it is well-known that the classical Weyl correspondence gives a Stratonovich-Weyl
correspondence for the triple $(G,\pi ,M)$
\cite{Fo}, \cite{GB}. More generally, the Pedersen-Weyl calculus on the flat coadjoint orbits of a nilpotent Lie group $G$ also provides a Stratonovich-Weyl
correspondence for the unitay irreducible representations of $G$ which are square-integrable modulo the center of $G$ \cite{Pe}, \cite{BB}.

On the other hand, Stratonovich-Weyl correspondences for the holomorphic
representations of Hermitian Lie groups were obtained by
taking the isometric part in the polar decomposition of the Berezin
quantization map, see \cite{CaPad}, \cite{CaSWC}, \cite{CaSWD}, \cite{FGBV}.

In \cite{CaRimut2}, we considered the case where $G$ is a Heisenberg motion group as a non-trivial example beyond the nilpotent and Hermitian cases. 
A Heisenberg motion group is the semidirect product of the $(2n+1)$-dimensional Heisenberg group $H_n$ by a compact subgroup $K$ of the unitary group $U(n)$. Note that the Heisenberg motion groups play an important role in the theory of Gelfand pairs \cite{BJLR}, \cite{BJR}. 

Each generic unitary irreducible representation $\pi$ of $G=H_n\rtimes K$ is holomorphically induced from the tensor product of a character of the center of $H_n$ by a unitary irreducible representation $\rho$ of $K$  \cite{BJLR}. Then $\pi$ can be associated, in the Kirillov-Kostant method of orbits, 
with a coadjoint $\mathcal O$ of $G$ which is diffeomorphic to ${\mathbb C}^n\times o$ where $o$ is the coadjoint orbit of $K$ associated with $\rho$.

In \cite{CaRimut2}, we used the Bargmann transform in order to get a 
Schr\"odinger realization of $\pi$ in a space of square-integrable functions
on ${\mathbb R}^{2n}$ and we showed that the usual Weyl correspondence
then gives a Stratonovich-Weyl correspondence for $\pi$. Note that, in \cite{CaBW}, we used a similar method to get an adapted Weyl correspondence in the sense of \cite{CaWQ}.  However, in the 
Schr\"odinger realization, it is difficult to obtain explicit formulas for the representation operators.

So, we propose here a slightly different method based on the Bargmann-Fock version of the Weyl calculus which appears as a particular case of the Weyl calculus on symmetric domains \cite{AU1}, \cite{AU3}. Combining the Weyl calculus on ${\mathbb C}^n$ with a Stratonovich-Weyl correspondence on $o$ \cite{FGBV}, \cite{CaSWC}, we obtain a Stratonovich-Weyl correspondence for $\pi$. In particular, we exhibit the
Stratonovich-Weyl quantizer and show that the Stratonovich-Weyl correspondence
can also be obtained by using the general method of \cite{GB} and \cite{AMO}.

The paper is organized as follows. First, we recall the Berezin calculus (Section \ref{sec:2}) and the complex Weyl calculus (Section \ref{sec:3}) for the (non-degenerate) unitary irreducible representations of the Heisenberg group. In Section \ref{sec:4} and \ref{sec:5}, we introduce the Heisenberg motion groups, their generic representations and the associated Berezin calculus. The Stratonovich-Weyl correspondence for a generic representation
$\pi$ is constructed in Section \ref{sec:6}. In particular, we compute
the Stratonovich-Weyl symbol of $d\pi(X)$ for each $X$ in the Lie algebra of $G$.

\section{Heisenberg group: Berezin quantization} \label{sec:2}

In this section, we review some well-known results about the
the Bargmann-Fock
model for the unitary irreducible (non-degenerated)
representations of the Heisenberg group. We follow the presentation
of \cite{CaRiv} and \cite{CaRimut2}.

For each $z,\,w \in {\mathbb C}^{n}$, we denote $zw:=\sum_{k=1}^nz_kw_k$. For each $z, z'\in {\mathbb C}^{n}$, let us define
\begin{equation*}\omega
(z,z')=\frac{i}{2}(z{\bar z'}-{\bar z}z').
\end{equation*}
Then the Heisenberg group $G_0$ of dimension $2n+1$ is ${\mathbb C}^{n}\times {\mathbb R}$ equipped with the multiplication
\begin{equation*}(z,c)\cdot (z',c')=(z+z',c+c'+\tfrac{1}{2}\omega (z,z')).\end{equation*}

The Lie algebra ${\mathfrak g}_0$ of $G_0$ can be identified to
${\mathbb C}^{n}\times {\mathbb R}$ with the Lie brackets
\begin{equation*}[(v,c), (v',c')]=(0,\omega (v,v')).\end{equation*}

Let $(e_k)_{1\leq k\leq n}$ be the canonical basis 
of ${\mathbb C}^{n}$ and let $\{X_1,\ldots,X_n,Y_1,\ldots,Y_n,Z\}$ be the basis of ${\mathfrak g}_0$ defined by $X_k=(e_k,0), Y_k=(ie_k,0)$ for $k=1,2, \ldots,n$ and $Z=(0,1)$. Then the only non trivial brackets in this basis are
$[X_k\,,\,Y_k]=Z$ for $k=1,2, \ldots,n$. In particular, we have, for each $(v,c)\in {\mathfrak g}_0$,
\begin{equation*}(v,c)=\sum_{k=1}^n(\Rea v_k)X_k+(\Ima v_k)Y_k+
cZ.\end{equation*}

Also, we denote by $\{X_1^{\ast},\ldots,X_n^{\ast},Y_1^{\ast},\ldots,Y_n^{\ast},{
Z}^{\ast}\}$ be the corresponding dual basis of ${\mathfrak
g}_0^{\ast}$. For each $\alpha \in {\mathbb C}^{n}$ and $\gamma \in {\mathbb R}$, let $(\alpha, \gamma)_{\ast}$ denote the element of ${\mathfrak
g}_0^{\ast}$ defined by 
\begin{equation*}\langle (\alpha, \gamma)_{\ast}, (v,c)\rangle := \omega (\alpha,  v)+\gamma c \end{equation*} for each $(v,c)\in {\mathfrak g}_0$. Then we have
\begin{equation*}(\alpha, \gamma)_{\ast}=
\sum_{k=1}^n(-\Ima {\alpha}_k)X_k^{\ast}+(\Rea {\alpha}_k)Y_k^{\ast}+
{\gamma}Z^{\ast}.\end{equation*}

For each real number $\lambda>0$, we denote by ${\mathcal O}_{\lambda}$ the
orbit of the element $(0,\lambda)_{\ast}=\lambda Z^{\ast}$ of
${\mathfrak g}_0^{\ast}$ under the coadjoint action of $G_0$ (the
case $\lambda<0$ can be treated similarly). By the Stone-von Neumann
theorem, there exists a unique (up to unitary equivalence) unitary
irreducible representation of $G_0$ whose restriction to the center
of $G_0$ is the character $(0,c)\rightarrow e^{i\lambda c}$
\cite{AKo}, \cite{Fo}. In fact, this representation is associated with the
coadjoint orbit ${\mathcal O}_{\lambda}$ by the Kirillov-Kostant
method of orbits \cite{Kir}, \cite{Ko}. Indeed, if we take the polarization at $\lambda Z^{\ast}$ to be the space spanned
by the elements $X_k+iY_k$ for $ k=1,2,\ldots, n$ and $Z$
then the method of orbits leads to the Bargmann-Fock representation $\pi_0$
defined as follows. Note that, for our method, we take here a complex polarization at $\lambda Z^{\ast}$ instead of a real one.

Let ${\mathcal H}_{0}$ be the Hilbert space of holomorphic functions $f$
on ${\mathbb C}^n$ such that \begin{equation*}\Vert f\Vert^2_{{\mathcal H}_{0}}
:=\int_{{\mathbb C}^n} \vert f(z)\vert^2\, e^{-\vert
z\vert^2/2\lambda}\,d\mu_{\lambda} (z) <+\infty\end{equation*} where
 $d\mu_{\lambda}(z):=(2\pi
\lambda)^{-n}\,dx\,dy$. Here $z=x+iy$ with $x$ and $y$ in ${\mathbb
R}^n$.

Then we have

\begin{equation*}({\pi}_0(g)f)(z)=\exp \left(i\lambda c_0+\tfrac{1}{2}i{\bar z_0}z-\tfrac{\lambda}{4}\vert z_0\vert^2\right)\,f(z+i\lambda z_0) \end{equation*}
for each $g=(z_0, c_0)\in G$ and $z\in {\mathbb C}^n$.

The differential of $\pi_0$ is thus given by

\begin{equation*}(d\pi_0 (v,c)f)(z)=i(\lambda c+\tfrac{1}{2}{\bar v}z)f(z)+df_z(i\lambda v)\end{equation*}
or, equivalently, by

\begin{equation*}\left\{\begin{aligned}
d\pi_{0}(X_k)f(z)=&\frac{1}{2}iz_kf(z)+\lambda i\frac
{\partial f}{\partial z_k}\\
d\pi_{0}(Y_k)f(z)=&\frac{1}{2}z_kf(z)-\lambda \frac {\partial
f}{\partial z_k}\\
d\pi_{0}(Z)f(z)=&i\lambda f(z).
\end{aligned}\right.\end{equation*}

Now, we recall the definition and some of the properties of the Berezin
quantization map \cite{Be1}, \cite{Be2}, \cite{CaRiv}.

For each $z\in {\mathbb C}^n$, consider the coherent state
$e_z(w)=\exp ({\bar z}w/2\lambda)$. Then we have the reproducing property
$f(z)=\langle f,e_z\rangle_{{\mathcal H}_{0}}$ for each $f\in {\mathcal
H}_{0}$.

Let ${\mathcal C}_0$ be the space of all operators (not necessarily
bounded) $A$ on ${\mathcal H}_{0}$ whose domain contains $e_z$
for each $z\in {\mathbb C}^n$. Then the Berezin symbol of $A\in {\mathcal C}_0$ is the
function $S_0(A)$ defined on ${\mathbb C}^n$ by
\begin{equation*}S_0(A)(z):=\frac {\langle A\,e_z\,,\,e_z\rangle_{\mathcal H_0}}{\langle e_z\,,\,e_z\rangle_{\mathcal H_0}}. \end{equation*}

Let us consider now the action of $G_0$ on ${\mathbb C}^n$ defined by
$g\cdot z =z-i\lambda z_0$ for $g=(z_0,c_0)$. 
We have the following result, see for instance \cite{CaRiv}.

\begin{proposition} \label{prop:Ber}\begin{enumerate}
\item Each $A\in {\mathcal C}_0$ is determined by $S_0(A)$;

\item For each $A\in {\mathcal C}_0$ and each $z\in {\mathbb C}^n$, we
have $S_0(A^{\ast})(z)=\overline {S_0(A)(z)}$;
 
\item For each $z\in {\mathbb C}^n$, we have $S_0(\id_{{\mathcal H}_{0}})(z)=1$. Here $\id_{{\mathcal H}_{0}}$ denotes the identity operator of ${\mathcal H}_0$;

\item For each $A\in {\mathcal C}_0$, $g\in G_0$ and $z\in {\mathbb C}^n$, we have
$\pi_0(g)^{-1}A\pi_0(g)\in {\mathcal C}_0$ and
\begin{equation*}S_0(A)(g\cdot z)=S_0(\pi_0(g)^{-1}A\pi_0(g))(z);\end{equation*}

\item The map $S_0$ is a bounded operator from
${\mathcal L}_2({\mathcal H}_{0})$ (endowed with the
Hilbert-Schmidt norm) to $L^2({\mathbb C}^n,\mu_{\lambda})$  which
is one-to-one and has dense range.

\end{enumerate} \end{proposition}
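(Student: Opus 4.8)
The plan is to prove the five assertions of Proposition~\ref{prop:Ber} in order, exploiting the coherent states $e_z$ and the fact that the Berezin symbol packages all the matrix elements of $A$ against these states. First I would establish the key analytic fact underlying (1): for $A\in\mathcal C_0$ the function $z\mapsto\langle A e_z,e_w\rangle_{\mathcal H_0}$ is antiholomorphic in $z$ and holomorphic in $w$ (up to the normalizing factor $e^{\bar z w/2\lambda}$), so it is determined by its restriction to the diagonal $w=z$, which is exactly $S_0(A)$ times $\langle e_z,e_z\rangle$. Concretely, since $\langle e_z,e_z\rangle_{\mathcal H_0}=e_z(z)=e^{|z|^2/2\lambda}$, the normalized symbol $S_0(A)(z)=\langle Ae_z,e_z\rangle\,e^{-|z|^2/2\lambda}$ recovers the full sesquilinear form by polarization and analytic continuation, and since the $e_z$ span a dense subspace of $\mathcal H_0$ (by the reproducing property $f(z)=\langle f,e_z\rangle$, any $f$ orthogonal to all $e_z$ vanishes), the operator $A$ is uniquely determined.

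For (2) I would simply compute $S_0(A^\ast)(z)=\langle A^\ast e_z,e_z\rangle/\langle e_z,e_z\rangle=\overline{\langle A e_z,e_z\rangle}/\langle e_z,e_z\rangle=\overline{S_0(A)(z)}$, using that $\langle e_z,e_z\rangle$ is real and positive. Statement (3) is immediate from the definition, since $\langle\id\,e_z,e_z\rangle=\langle e_z,e_z\rangle$. For the covariance property (4), the essential point is that the representation acts on coherent states in a controlled way: one checks from the explicit formula for $\pi_0(g)$ that $\pi_0(g)e_z$ is, up to a nonzero scalar factor depending on $g$ and $z$, equal to $e_{g\cdot z}$ where $g\cdot z=z-i\lambda z_0$. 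Substituting this into the definition of $S_0(\pi_0(g)^{-1}A\pi_0(g))(z)=\langle\pi_0(g)^{-1}A\pi_0(g)e_z,e_z\rangle/\langle e_z,e_z\rangle$ and moving $\pi_0(g)$ across the inner product (it is unitary, so $\pi_0(g)^{-1}=\pi_0(g)^\ast$), the scalar factors cancel between numerator and denominator and one is left with $\langle A e_{g\cdot z},e_{g\cdot z}\rangle/\langle e_{g\cdot z},e_{g\cdot z}\rangle=S_0(A)(g\cdot z)$.

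Finally, for (5) I would realize $S_0$ as an integral operator and identify its kernel. Writing the Hilbert--Schmidt operator $A$ via its integral kernel against the reproducing kernel, one finds that $S_0$ maps Hilbert--Schmidt operators into $L^2(\mathbb C^n,\mu_\lambda)$ boundedly; the overlap function $\langle e_z,e_w\rangle=e^{\bar w z/2\lambda}$ gives the reproducing kernel, and a direct computation of $\int_{\mathbb C^n}|S_0(A)(z)|^2\,d\mu_\lambda(z)$ against $\|A\|_{\mathcal L_2}^2$ yields the bound. Injectivity follows from (1), and the density of the range can be seen by checking that the orthogonal complement of $\operatorname{Ran}S_0$ in $L^2(\mathbb C^n,\mu_\lambda)$ is trivial, e.g.\ by testing against symbols of rank-one operators $e_w\otimes e_{w'}$, whose symbols $z\mapsto e^{\bar z w'/2\lambda}e^{\bar w z/2\lambda}e^{-|z|^2/2\lambda}$ form a total set. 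The main obstacle I expect is the careful bookkeeping in (5): verifying the exact boundedness constant and the totality claim requires the correct Gaussian integral identities for the measure $\mu_\lambda$ and the weight $e^{-|z|^2/2\lambda}$, and one must be attentive to domain issues since $\mathcal C_0$ contains unbounded operators, so the Hilbert--Schmidt statement properly concerns the restriction of $S_0$ to $\mathcal L_2(\mathcal H_0)\cap\mathcal C_0$.
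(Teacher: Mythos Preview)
Your proposal is correct and follows the standard route to these facts about the Berezin symbol. Note, however, that the paper does not actually prove this proposition: it is stated as a known result with the citation ``see for instance \cite{CaRiv}'' and no argument is supplied. So there is no in-paper proof to compare against; your sketch is essentially the argument one would find in the cited reference (polarization and analytic continuation off the diagonal for (1), the scalar-multiple relation $\pi_0(g)e_z=c(g,z)\,e_{g\cdot z}$ for (4), and the kernel/Gaussian analysis for (5)).

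One small point worth tightening in (5): the cleanest way to see boundedness is to note that $S_0(A)(z)=k_A(z,z)\,e^{-|z|^2/2\lambda}$, where $k_A$ is the integral kernel of $A$ as in Section~\ref{sec:3}; then the inequality $\int |k_A(z,z)|^2 e^{-|z|^2/\lambda}\,d\mu_\lambda(z)\le \|A\|_{\mathcal L_2}^2$ follows from the reproducing-kernel identity and the fact that $k_A$ is holomorphic in the first variable and antiholomorphic in the second. Your totality claim for the symbols of $e_w\otimes e_{w'}$ is correct but deserves one explicit Gaussian integral to justify it, since that is where the specific normalization of $\mu_\lambda$ enters.
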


The map $\Phi_{\lambda}:{\mathbb C}^n\rightarrow {\mathcal O}_{\lambda}, z\rightarrow (-iz,\lambda)_{\ast}$
is clearly a diffeomorphism satisfying
\begin{equation*}
\Phi_{\lambda}(g\cdot z)=\Ad^{\ast}(g)\,\Phi_{\lambda}(z) \end{equation*}
for each $g\in G_0$ and $z\in {\mathbb C}^n$. Moreover, we have,
for each $X\in {\mathfrak g}_0$ and each
$z\in {\mathbb C}^n$,
\begin{equation*}S_0(d\pi_{0}(X))(z)=i \langle \Phi_{\lambda}
(z),X\rangle. \end{equation*}
This property gives a connection between $\pi_0$ and ${\mathcal O}_{\lambda}$.

Note that the map $z\rightarrow g_z:=({\lambda}^{-1}iz,0)$ is a section
for the action of $G$ on ${\mathbb C}^n$, that is, we have $g_z \cdot 0=z$
for each $z\in {\mathbb C}^n$.

\section{Heisenberg group: Weyl quantization} \label{sec:3}

Here we apply the general method for constructing Stratonovich-Weyl correspondence \cite{GB}, \cite{AMO} and then recover the Bargmann-Fock version of  the Weyl
calculus, see \cite{AU1}, Example 2.2 and Example 4.2.

We start from the so-called Stratonovich-Weyl quantizer $\Omega$. Here
it is generated by the parity operator $R_0$ of ${\mathcal H}_0$ defined by
\begin{equation*}(R_0f)(z)=2^nf(-z). \end{equation*}
More precisely, we define
\begin{equation*}\Omega_0(z):=\pi_0(g_z)R_0\pi_0(g_z)^{-1} \end{equation*} for each $z\in {\mathbb C}^n$. Then we get immediately
\begin{equation}\label{eq:omega0}(\Omega_0(z)f)(w)=2^n\exp \left(\tfrac{1}{\lambda}(w{\bar z}-\vert z\vert^2)\right) f(2z-w) \end{equation}
for each $z, w\in {\mathbb C}^n$. Thus $\Omega_0$ satisfies the covariance
property
\begin{equation}\label{eq:Om0}\Omega_0(g\cdot z)=\pi_0(g)\Omega_0(z)\pi_0(g)^{-1} \end{equation} for each $g\in G_0$ and $z \in {\mathbb C}^n$.

We are now in position to recover the complex Weyl calculus. For each trace-class operator $A$ on ${\mathcal H}_0$, let $W_0(A)$ be the function on
${\mathbb C}^n$ defined by
\begin{equation*} W_0(A)(z):=\Tr(A\Omega_0(z))\end{equation*}
for each $z \in {\mathbb C}^n$.

We can give an integral expression for $W_0(A)$ as follows. 
For each trace class operator $A$ on ${\mathcal H}_0$, let $k_A(z,w)$ be the kernel of $A$, that is, for each $f \in {\mathcal H}_0$ and $z \in {\mathbb C}^n$, we have
\begin{equation*}(Af)(z)=
\int_{{\mathbb C}^n}k_A(z,w)f(w)\,e^{-\vert
w\vert^2/2\lambda}d\mu_{\lambda}(w).
\end{equation*} 

Note that, by the reproducing property, we have, for each $f \in {\mathcal H}_0$ and $z \in {\mathbb C}^n$
\begin{align*} (Af)(z)=&{\langle Af,e_z\rangle_{\mathcal H_0}}
={\langle f,A^{\ast}e_z\rangle_{\mathcal H_0}}\\
=&\int_{{\mathbb C}^n}f(w)\overline{(A^{\ast}e_z)(w)}\,e^{-\vert
w\vert^2/2\lambda}d\mu_{\lambda}(w) \end{align*}
so that we get
\begin{equation*}k_A(z,w)=\overline{(A^{\ast}e_z)(w)}=
\overline{\langle A^{\ast}e_z, e_w\rangle_{\mathcal H_0}}
=\langle Ae_w,e_z\rangle_{\mathcal H_0}\end{equation*}
for each $z, w \in {\mathbb C}^n$. This shows that $k_A(z,w)$ is
holomorphic in $z$ and anti-holomorphic in $w$. 

\begin{proposition} \label{prop:intW0}For each trace-class operator $A$ on ${\mathcal H_0}$
and each $z \in {\mathbb C}^n$, we have
\begin{equation*}
W_0(A)(z)=2^n\int_{{\mathbb C}^n}k_A(w,2z-w)\exp \left(\tfrac{1}{\lambda}\left(-z{\bar z}+z{\bar w}-\tfrac{1}{2}w{\bar w}\right)\right) d\mu_{\lambda}(w). 
\end{equation*}\end{proposition}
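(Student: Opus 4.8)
The plan is to compute the trace $\Tr(A\Omega_0(z))$ by passing to integral kernels and integrating along the diagonal. First I would record the trace formula adapted to the Bargmann--Fock model: the coherent states give the resolution of identity $\id_{{\mathcal H}_0}=\int_{{\mathbb C}^n}|e_u\rangle\langle e_u|\,e^{-|u|^2/2\lambda}\,d\mu_\lambda(u)$, which is nothing but the reproducing property $f(u)=\langle f,e_u\rangle$ paired against a second function. Consequently every trace-class operator $C$ on ${\mathcal H}_0$ satisfies $\Tr(C)=\int_{{\mathbb C}^n}\langle Ce_u,e_u\rangle\,e^{-|u|^2/2\lambda}\,d\mu_\lambda(u)$, the interchange of trace and integral being standard for trace-class operators. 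Since the kernel identity established above reads $k_C(u,u)=\langle Ce_u,e_u\rangle$, this becomes $\Tr(C)=\int_{{\mathbb C}^n}k_C(u,u)\,e^{-|u|^2/2\lambda}\,d\mu_\lambda(u)$. The operator $C=A\Omega_0(z)$ is trace-class because $A$ is trace-class and $\Omega_0(z)$ is bounded (it is $2^n$ times a unitary operator), so the formula applies.

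Next I would compute the kernel of $C=A\Omega_0(z)$. Inserting the explicit formula \eqref{eq:omega0} for $\Omega_0(z)$ and then the integral representation of $A$ through $k_A$, I obtain, for $f\in{\mathcal H}_0$,
\begin{equation*}(Cf)(u)=2^n\int_{{\mathbb C}^n}k_A(u,v)\exp\!\left(\tfrac1\lambda\bigl(v\bar z-|z|^2\bigr)\right)f(2z-v)\,e^{-|v|^2/2\lambda}\,d\mu_\lambda(v).\end{equation*}
The reflection $w=2z-v$ preserves $d\mu_\lambda$ and brings this into the standard form $(Cf)(u)=\int k_C(u,w)f(w)e^{-|w|^2/2\lambda}d\mu_\lambda(w)$, from which I read off
\begin{equation*}k_C(u,w)=2^n k_A(u,2z-w)\exp\!\left(\tfrac1\lambda\bigl((2z-w)\bar z-|z|^2\bigr)\right)e^{(|w|^2-|2z-w|^2)/2\lambda}.\end{equation*}
The use of Fubini here is legitimate thanks to the Gaussian decay of the weight together with the trace-class hypothesis on $A$.

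Finally I would specialize to the diagonal $w=u$, multiply by $e^{-|u|^2/2\lambda}$ and integrate over $u$. Expanding $|2z-u|^2=4|z|^2-2z\bar u-2u\bar z+|u|^2$ and gathering the three exponential contributions, the terms linear in $u\bar z$ cancel and the exponent collapses to $\tfrac1\lambda\bigl(-z\bar z+z\bar u-\tfrac12 u\bar u\bigr)$; renaming the integration variable $u$ to $w$ then yields precisely the asserted identity. The only genuine work is this bookkeeping of the Gaussian exponent, which is where sign and factor errors could creep in; the structural ingredients (the trace-via-diagonal formula and the measure-preserving reflection) are routine.
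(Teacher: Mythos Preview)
Your argument is correct and follows essentially the same route as the paper: compute the integral kernel of $A\Omega_0(z)$ via the explicit formula for $\Omega_0(z)$ and the reflection $w\mapsto 2z-w$, then take the trace by integrating the kernel along the diagonal. The only cosmetic difference is that the paper names the trace-via-diagonal step ``Mercer's theorem'' whereas you derive it from the coherent-state resolution of the identity; the computations and the resulting kernel $k_{A\Omega_0(z)}(z',w)=2^n k_A(z',2z-w)\exp\bigl(\tfrac{1}{\lambda}(-z\bar z+z\bar w)\bigr)$ coincide.
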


\begin{proof} By Eq. \ref{eq:omega0}, we have for each $f \in {\mathcal H}_0$ and $z,z'\in {\mathbb C}^n$
\begin{align*}
(A\Omega_0(z)&f)(z')=\int_{{\mathbb C}^n}k_A(z',w)(\Omega_0(z)f)(w)
e^{-\vert
w\vert^2/2\lambda}d\mu_{\lambda}(w)\\
=&2^n\int_{{\mathbb C}^n}k_A(z',w)f(2z-w)
\exp \left(\tfrac{1}{\lambda}\left({\bar z}w-z{\bar z}-\tfrac{1}{2}w{\bar w}\right)\right) d\mu_{\lambda}(w)\\
=&2^n\int_{{\mathbb C}^n}k_A(z',2z-w)f(w)
\exp \left(\tfrac{1}{\lambda}\left(-z{\bar z}+z{\bar w}\right)\right) 
\,e^{-\vert
w\vert^2/2\lambda}d\mu_{\lambda}(w).
\end{align*}
This shows that the kernel of $A\Omega_0(z)$ is
\begin{equation*}k_{A\Omega_0(z)}(z',w)=2^n k_A(z',2z-w)
\exp \left(\tfrac{1}{\lambda}\left(-z{\bar z}+z{\bar w}\right)\right). 
\end{equation*}
Then, by applying Mercer's theorem, we get
\begin{align*}
W_0(A)&(z)=\int_{{\mathbb C}^n}k_{A\Omega_0(z)}(w,w)e^{-\vert
w\vert^2/2\lambda}d\mu_{\lambda}(w)\\
&=2^n \int_{{\mathbb C}^n}k_A(w,2z-w)\exp \left(\tfrac{1}{\lambda}\left(-z{\bar z}+z{\bar w}\right)\right)e^{-\vert
w\vert^2/2\lambda}d\mu_{\lambda}(w). \end{align*}
\end{proof}

Let us recall the usual Weyl correspondence and briefly describe its connection with $W_0$.
Let $G_0$ act on ${\mathbb R}^{2n}$ by
\begin{equation*}(z_0,c_0)\cdot (p,q):=(p+\Rea z_0,q+\lambda \Ima z_0).
\end{equation*}
This action corresponds to the preceding action of $G_0$ on ${\mathbb C}^n$ via the identification ${\mathbb R}^{2n}\simeq {\mathbb C}^{n}$
given by $(p,q)\leftrightarrow z=q-i\lambda p$.

Let $R_1$ be the operator on $L^2({\mathbb R}^{n})$ defined by
\begin{equation*}(R_1\phi)(x)=2^n\phi(-x)
\end{equation*}
and let $\pi_1$ be the 
Schr\"odinger representation of $G_0$ on
$L^2({\mathbb R}^n)$ is defined by
\begin{equation*}(\pi_1(a+ib,c)\phi)(x)
=e^{i\lambda(c-bx+\frac{1}{2}ab)}f(x-a).\end{equation*}

We consider the Stratonovich-Weyl quantizer 
$\Omega_1$ on ${\mathbb R}^{2n}$ given by 
\begin{equation}\label{eq:Om1} \Omega_0(g \cdot (0,0)):=\pi_1(g)R_1\pi_1(g)^{-1}
\end{equation} for each $g\in G$.
Then it is well-known that the corresponding Stratonovich-Weyl correspondence $W_1$ defined on ${\mathbb R}^{2n}$ by 
\begin{equation*}W_1(A)(p,q):=\Tr (A\Omega_1(p,q))\end{equation*}
is exactly the inverse map of the classical Weyl correspondence $W^1$ which is 
usually defined by the formula
\begin{equation*}(W^1(F)\phi)
(p)={(2\pi)}^{-n}\,\int_{{\mathbb R}^{2n}}\, e^{isq} \,F( p+\tfrac{1}{2}s,
q)\,\phi (p+s)\,ds\,dq,
\end{equation*} see, for instance, \cite{AE}, \cite{GB}, \cite{CaGP}.

The connection between $W_0$ and $W_1$ (hence $W^1$) is given by the
Bargmann transform which is the unitary operator $B:L^2({\mathbb R}^{n})\rightarrow {\mathcal H}_0$ defined by
\begin{equation*}(B\phi)(z)=(\lambda/ \pi)^{n/4}\,\int_{{\mathbb R}^n}\,e^{
(1/4\lambda)z^2+ixz-(\lambda/2)x^2}\,\phi(x)\,dx.\end{equation*}

Indeed, it is well-known that $B$ intertwines $\pi_0$ and $\pi_1$, that is, we have for each $g\in G$, $\pi_1(g)=B^{-1}\pi_0(g)B$ \cite{Fo}, \cite{Comb}. Moreover, we can verify that $R_1=B^{-1}R_0B$. 
Taking Eq. \ref{eq:Om0} and  Eq. \ref{eq:Om1} into account, this implies that, for each trace-class operator $A$ on ${\mathcal H}_0$
and each $(p,q)\in {\mathbb R}^{2n}$,
we have
\begin{equation} \label{eq:connect} W_0(A)(q-i\lambda p)=W_1(B^{-1}AB)(p,q).\end{equation}

\begin{proposition} $W_0$ is a Stratonovich-Weyl correspondence for $(G_0,\pi_0,{\mathbb C}^n)$.\end{proposition}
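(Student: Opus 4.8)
The plan is to transfer all three defining properties from $W_1$ to $W_0$ through the Bargmann transform, using the connection formula \eqref{eq:connect} together with the fact, recalled above, that $W_1$ is a Stratonovich-Weyl correspondence for $(G_0,\pi_1,{\mathbb R}^{2n})$. Throughout I write $z=q-i\lambda p$ for the identification ${\mathbb R}^{2n}\simeq {\mathbb C}^n$, and I use that $B$ is unitary, so that $B^{-1}=B^{\ast}$ and $(B^{-1}AB)^{\ast}=B^{-1}A^{\ast}B$ for any operator $A$. For property (1), Eq. \eqref{eq:connect} gives $W_0(A^{\ast})(z)=W_1(B^{-1}A^{\ast}B)(p,q)=W_1\big((B^{-1}AB)^{\ast}\big)(p,q)$, and the reality property of $W_1$ turns this into $\overline{W_1(B^{-1}AB)(p,q)}=\overline{W_0(A)(z)}$.

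Next, for covariance (2), I would use that $B$ intertwines $\pi_0$ and $\pi_1$, namely $\pi_1(g)=B^{-1}\pi_0(g)B$. Then $B^{-1}\pi_0(g)A\pi_0(g)^{-1}B=\pi_1(g)(B^{-1}AB)\pi_1(g)^{-1}$, so by Eq. \eqref{eq:connect} and the covariance of $W_1$ we obtain $W_0(\pi_0(g)A\pi_0(g)^{-1})(z)=W_1(B^{-1}AB)(g^{-1}\cdot(p,q))$. It then remains to observe that, under the identification $z=q-i\lambda p$, the $G_0$-action on ${\mathbb R}^{2n}$ corresponds to the $G_0$-action on ${\mathbb C}^n$, so that $g^{-1}\cdot(p,q)$ is sent to $g^{-1}\cdot z$; this is exactly the compatibility of the two actions noted before Eq. \eqref{eq:connect}, and it yields $W_0(\pi_0(g)A\pi_0(g)^{-1})(z)=W_0(A)(g^{-1}\cdot z)$.

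For traciality (3), given trace-class operators $A$ and $C$, I would change variables $z=q-i\lambda p$ in the integral $\int_{{\mathbb C}^n}W_0(A)(z)W_0(C)(z)\,d\mu_{\lambda}(z)$. Writing $z=x+iy$ one has $x=q$ and $y=-\lambda p$, so this change of variables sends $d\mu_{\lambda}(z)=(2\pi\lambda)^{-n}\,dx\,dy$ to $(2\pi)^{-n}\,dp\,dq$, which is precisely the measure for which $W_1$ is tracial. Applying Eq. \eqref{eq:connect} to both factors and then the traciality of $W_1$ gives $\Tr\big((B^{-1}AB)(B^{-1}CB)\big)$, which equals $\Tr(AC)$ by $B^{-1}B=\id$ and the cyclicity of the trace. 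Finally, the isomorphism property follows because $A\mapsto B^{-1}AB$ is a linear bijection on trace-class operators and $W_1$ is an isomorphism onto its function space, so their composition with the invertible change of variables $z=q-i\lambda p$ is again an isomorphism.

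I expect the main obstacle to be the bookkeeping of normalizations in step (3): one must verify that the Jacobian of $z=q-i\lambda p$ exactly converts the Gaussian-normalized measure $\mu_{\lambda}$ into the measure $(2\pi)^{-n}\,dp\,dq$ against which the classical Weyl correspondence is tracial, so that the right-hand side comes out as $\Tr(AC)$ with no spurious constant. The remaining steps are formal consequences of the unitarity and intertwining properties of $B$ and of the compatibility of the two group actions already established.
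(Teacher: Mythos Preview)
Your argument is correct, but it differs from the paper's proof in how properties (1) and (2) are established. The paper proves the reality property (1) directly from the definition $W_0(A)(z)=\Tr(A\Omega_0(z))$: since $\Omega_0(z)$ is self-adjoint, one has $W_0(A^{\ast})(z)=\Tr(A^{\ast}\Omega_0(z))=\overline{\Tr(\Omega_0(z)A)}=\overline{W_0(A)(z)}$. Likewise, covariance (2) is read off immediately from the quantizer identity $\Omega_0(g\cdot z)=\pi_0(g)\Omega_0(z)\pi_0(g)^{-1}$ (Eq.~\eqref{eq:Om0}), without invoking $W_1$ or the Bargmann transform at all. Only for traciality (3) does the paper appeal to Eq.~\eqref{eq:connect} and the unitarity of $W_1$, exactly as you do. Your approach---transferring \emph{all three} properties through the Bargmann transform---is perfectly valid and has the virtue of being uniform, but it is less economical for (1) and (2): you must invoke the intertwining relation $\pi_1(g)=B^{-1}\pi_0(g)B$ and the compatibility of the two $G_0$-actions, whereas the paper's quantizer-based argument for these two properties is a one-line trace manipulation. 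Conversely, your route makes explicit the Jacobian computation in (3) that the paper leaves implicit in the phrase ``$W_0$ is unitary since $W_1$ is''.
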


\begin{proof}  For each trace-class operator $A$ on ${\mathcal H}_0$ and each $z\in {\mathbb C}^n$, we have 
\begin{equation*}W_0(A^{\ast})(z)=\Tr(A^{\ast}\Omega_0(z))=\Tr(\Omega_0(z)A)^{\ast}=
\overline{\Tr(\Omega_0(\xi)A}=\overline{W_0(A)(z)}.\end{equation*} Moreover, $W_0$ is covariant by construction. Finally, by Eq. \ref{eq:connect}, $W_0$ is unitary since $W_1$ is, see \cite{Fo}. \end{proof}

In particular, since $W_0$ is unitary, we have that $W_0^{-1}=W_0^{\ast}$.
Then we get, for each $F\in L^2({\mathbb C}^n,\mu_{\lambda})$
\begin{equation*}W_0^{-1}(F)=\int_{{\mathbb C}^n}\Omega_0 (z)F(z)\,d\mu_{\lambda}(z).\end{equation*}
Such a formula is particularly suitable for extending the Weyl calculus to 
the symmetric domains, see \cite{AU1}. However, the generalized Weyl calculus is, in general, no longer unitary \cite{AU2}.

Let us also mention that that the unitary part in the polar decomposition of the Berezin correspondence $S_0$ introduced in Section \ref{sec:2} is $W_0$, see \cite{Luo}, Theorem 6 and, for a slightly different proof based
on covariance, \cite{CaRiv}.

On the other hand, it is also known that the classical Weyl correspondence
can be extended to differential operators on ${\mathbb R}^{n}$, see for instance \cite{Ho}, \cite{Vo}. Here, we can similarly extend $W_0$ to operators of the form $A=\sum_{p,q}a_{pq}z^p(\frac{\partial}{\partial z})^q$ by using the integral formula for $W_0(A)$ given in Proposition \ref{prop:intW0}. 

Here, we use the standard multi-index notation. If $p=(p_1,p_2,\ldots,p_n)\in {\mathbb N}^n$, we set $z^{p}=z_1^{p_1}z_2^{p_2}\dots z_n^{p_n}$, $\vert p\vert=p_1+
p_2+\dots+p_n$, $p!=p_1!p_2!\dots p_n!$. Also, we say that $p\leq q$
if $p_k\leq q_k$ for each $k=1,2,\ldots, n$ and, in this case, we denote
$\tbinom{q}{p}=\tfrac{q!}{p!(q-p)!}$.

\begin{proposition}\label{prop:diff0}
For each $p,q \in {\mathbb N}^n$, let $A_{pq}:=z^p(\frac{\partial}{\partial z})^q$. Then we have 
\begin{equation*}W_0(A_{pq})(z)=(2\lambda)^{-\vert q\vert} \sum_{k\leq p,q}(-\lambda)^{\vert k\vert}\frac{p!\,q!}{k!(p-k)!(q-k)!}z^{p-k}{\bar z}^{q-k}.\end{equation*}
\end{proposition}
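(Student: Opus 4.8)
The plan is to evaluate the integral formula for $W_0(A_{pq})$ from Proposition \ref{prop:intW0} directly. First I would compute the kernel $k_{A_{pq}}(z,w)$ of the operator $A_{pq}=z^p(\frac{\partial}{\partial z})^q$. Using the formula $k_A(z,w)=\langle A e_w, e_z\rangle_{\mathcal H_0}$ and the coherent state $e_w(\zeta)=\exp(\bar w \zeta/2\lambda)$, I note that $(\frac{\partial}{\partial \zeta})^q e_w(\zeta)=(\bar w/2\lambda)^q e_w(\zeta)$, so $A_{pq}e_w(\zeta)=\zeta^p(\bar w/2\lambda)^q e_w(\zeta)$. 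Pairing against $e_z$ and using the reproducing property evaluated at $\zeta=z$ gives
\begin{equation*}
k_{A_{pq}}(z,w)=z^p\Bigl(\frac{\bar w}{2\lambda}\Bigr)^q \exp\Bigl(\frac{z\bar w}{2\lambda}\Bigr).
\end{equation*}

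Next I would substitute this kernel into the integral expression of Proposition \ref{prop:intW0}, replacing $z$ by $w$ and $w$ by $2z-w$ as dictated by that formula. This produces
\begin{equation*}
W_0(A_{pq})(z)=2^n\int_{{\mathbb C}^n} w^p\Bigl(\frac{\overline{2z-w}}{2\lambda}\Bigr)^q \exp\Bigl(\frac{w\,\overline{(2z-w)}}{2\lambda}\Bigr)\exp\Bigl(\tfrac{1}{\lambda}\bigl(-z\bar z+z\bar w-\tfrac12 w\bar w\bigr)\Bigr)\,d\mu_\lambda(w).
\end{equation*}
I would then simplify the combined exponent. Writing $\overline{2z-w}=2\bar z-\bar w$, the quadratic-in-$w$ terms inside the exponential should collect into a Gaussian of the form $\exp(-\tfrac{1}{\lambda}|w-z|^2)$ (up to a $z$-dependent prefactor), after completing the square; the cross terms $z\bar w$ and $w\bar z$ are what make this work out to a genuine Gaussian centered at $w=z$. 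The plan is to change variables $w=z+\sqrt{\lambda}\,u$ (or simply $u=w-z$) so the integral becomes a Gaussian moment integral in $u$ against the remaining polynomial factor $w^p(2\bar z-\bar w)^q$.

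After the shift, the polynomial factor becomes $(z+u')^p(\bar z-u')^q$ for the appropriate scaled variable, which I would expand by the multinomial theorem, and I would integrate term by term against the normalized complex Gaussian. The key computational input is the moment formula $\int_{{\mathbb C}^n} u^a \bar u^b\, e^{-|u|^2/2\lambda}\,d\mu_\lambda(u)=\delta_{ab}\,a!\,(2\lambda)^{|a|}$, which forces the surviving terms to pair equal powers of $u$ and $\bar u$. Matching these diagonal terms and bookkeeping the binomial coefficients is what generates the sum over $k\le p,q$ with the factor $\frac{p!\,q!}{k!(p-k)!(q-k)!}$ and the powers $z^{p-k}\bar z^{q-k}$; the factor $(2\lambda)^{-|q|}$ comes from the $(2\lambda)^{-q}$ in the kernel combined with the Gaussian normalization, and the sign $(-\lambda)^{|k|}$ arises from the $(-u')^k$ contributions paired with the $+u'$ contributions.

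The main obstacle I anticipate is the careful completion of the square and the resulting change of variables: one must verify that the full exponent is exactly the product of a Gaussian weight in $w-z$ times the measure's own weight $e^{-|w|^2/2\lambda}$, with no leftover linear or constant discrepancy, so that the normalization constants (including the $2^n$ prefactor and the $(2\pi\lambda)^{-n}$ in $d\mu_\lambda$) cancel cleanly to give coefficient $1$ on the leading term. Once the Gaussian structure is pinned down, the remaining multinomial-and-moment bookkeeping is routine, and I would organize it by the index $k=a$ recording the common power of $u$ and $\bar u$ that survives the moment integral, which is precisely the summation index in the stated formula.
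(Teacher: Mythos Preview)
Your proposal is correct and follows essentially the same route as the paper: compute the kernel $k_{A_{pq}}(z,w)=(2\lambda)^{-|q|}z^p\bar w^q e^{z\bar w/2\lambda}$, plug it into the integral formula of Proposition~\ref{prop:intW0}, observe that the total exponent collapses exactly to $-|w-z|^2/\lambda$, shift $w\mapsto w+z$, expand $(w+z)^p(\bar z-\bar w)^q$ binomially, and integrate term by term against the Gaussian. One small slip to fix when you carry it out: the Gaussian weight that appears is $e^{-|u|^2/\lambda}$, not $e^{-|u|^2/2\lambda}$, so the relevant moment identity is $\int_{\mathbb C^n} w^k\bar w^l\,e^{-|w|^2/\lambda}\,d\mu_\lambda(w)=2^{-n}\lambda^{|k|}k!\,\delta_{kl}$; with this correction the $2^n$ prefactor cancels and the powers of $\lambda$ line up to give the stated $(-\lambda)^{|k|}$.
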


\begin{proof} Let $p,q \in {\mathbb N}^n$. By differentiating, for each $f\in {\mathcal H}_0$, the reproducing property
\begin{equation*} f(z) =\int_{{\mathbb C}^n}f(w)
e^{z
{\bar w}/2\lambda}
e^{-\vert
w\vert^2/2\lambda}d\mu_{\lambda}(w)\end{equation*}
with respect to $z$, we get 
\begin{equation*} z^p \left(\frac{\partial}{\partial z}\right)^q f =
(2\lambda)^{-\vert q\vert} z^p\int_{{\mathbb C}^n}f(w)
{\bar w}^q e^{z
{\bar w}/2\lambda}
e^{-\vert
w\vert^2/2\lambda}d\mu_{\lambda}(w).\end{equation*}
Hence $A_{pq}$ has kernel
\begin{equation*}k_{A_{pq}}(z,w)=(2\lambda)^{-\vert q\vert}z^p{\bar w}^q e^{z
{\bar w}/2\lambda}.\end{equation*}

Consequently, by using the integral formula of Proposition \ref{prop:intW0}, we have that
\begin{align*}W_0(A_{pq})(z)&=2^n(2\lambda)^{-\vert q\vert}
\int_{{\mathbb C}^n}w^p(2{\bar z}-{\bar w})^q e^{-\vert w-z\vert^2/\lambda}d\mu_{\lambda}(w)\\
&=2^n(2\lambda)^{-\vert q\vert}
\int_{{\mathbb C}^n}(w+z)^p({\bar z}-{\bar w})^q e^{-\vert w\vert^2/\lambda}d\mu_{\lambda}(w).
\end{align*}

Now, by the binome formula, we can write
\begin{align*} (w+z)^p&=\sum_{k\leq p}\binom{p}{k}z^{p-k}w^k;\\
({\bar z}-{\bar w})^q&=\sum_{l\leq q}\binom{q}{l}(-{\bar w})^l {\bar z}^{q-l} \end{align*}
and we can remark that for each $k,l\in {\mathbb N}^n$, we have
\begin{equation*}\int_{{\mathbb C}^n}w^k{\bar w}^l e^{-\vert w\vert^2/\lambda}d\mu_{\lambda}(w)=2^{-n}\lambda^{\vert k\vert }k!\delta_{kl}.\end{equation*}
By replacing in the previous expression of $W_0(A_{pq})(z)$, we obtain the
desired result.
\end{proof}

By computing in particular $W_0(d\pi_{0}(X))$ for $X\in {\mathfrak g}_0$,
we can verify that
\begin{equation*}W_0(d\pi_{0}(X))(z)=i \langle \Phi_{\lambda}
(z),X\rangle. \end{equation*}
for each $X\in {\mathfrak g}_0$ and $z\in {\mathbb C}^n$. This implies
that $W_0$ is also an adapted Weyl correspondence in the sense of \cite{CaWQ}.

\section {Orbits and representations of Heisenberg motion groups} \label{sec:4}

We fix a closed subgroup $K$ of $U(n)$. Then $K$ acts on $G_0$ by $k\cdot (z,c)=(kz,c)$ and we can form the semidirect product $G:=G_0\rtimes K$ which is called a Heisenberg motion
group. The elements of $G$ can be written as $(z,c,k)$ where $z\in {\mathbb C}^n$, $c\in {\mathbb R}$ and $k\in
K$. The multiplication of $G$ is then given by
\begin{equation*}(z,c,k)\cdot (z',c',k')=(z+kz',c+c'+\tfrac{1}{2}\omega (z,kz'),kk').\end{equation*}

We denote by
${\mathfrak k}$ and ${\mathfrak g}$ the Lie algebras of $K$ and $G$. The Lie brackets of ${\mathfrak g}$ are given by
\begin{equation*}[(v,c,A),(v',c',A')]=(Av'-A'v,
\omega (v,v'),[A,A']).\end{equation*} 

We compute the adjoint and coadjoint actions of $G$. 
Consider $g=(z_0,c_0,k_0)\in G$ and
$X=(v,c,A)\in {\mathfrak g}$. We can verify that
\begin{align*}&\Ad(g)X=\frac{d}{dt}(g\exp(tX)g^{-1})\vert_{t=0}\\
&=\bigl( k_0v-(\Ad(k_0)A)z_0, c+\omega (z_0,k_0v)-\tfrac{1}{2}\omega
(z_0,(\Ad(k_0)A)z_0),\Ad(k_0)A\bigr).\end{align*}

Let us denote by $\xi=(u,d,\phi)_{\ast}$, where $u\in {\mathbb C}^{n}$,
$d\in {\mathbb R}$ and $\phi \in {\mathfrak k}^{\ast}$, the
element of ${\mathfrak g}^{\ast}$ defined by
\begin{equation*}\langle \xi,(v,c,A)\rangle =\omega (u,v)+dc+\langle
\phi, A\rangle. \end{equation*} 
Also, for $u,v\in {\mathbb
C}^{n}$, we denote by $v\times u$ the element of ${\mathfrak
k}^{\ast}$ defined by $\langle v\times u, A\rangle:=\omega
(u,Av)$ for $A\in {\mathfrak k}$.
Then, from the formula for the adjoint action, we deduce that for each $\xi=(u,d,\phi)_{\ast}\in {\mathfrak g}^{\ast}$ and
$g=(z_0,c_0,k_0)\in G$ we have
\begin{equation*} \Ad^{\ast}(g)\xi= \bigl(k_0u-dz_0,d,\Ad^{\ast}(k_0)\phi
+z_0\times (k_0u-\tfrac {1}{2}dz_0)\bigr)_{\ast}.\end{equation*} 

From this, we deduce that if a coadjoint orbit of $G$ contains a point $(u,d,\phi)_{\ast}$
with $d\not= 0$ then it also contains a point of the form $(0,d,\phi_0)_{\ast}$. Such an orbit is called
\textit {generic}.

Now we recall the definition of the \textit {generic} unitary representations
of $G$ which are associated with the generic coadjoint orbits of $G$ \cite{CaRimut2}. Note that these representations can be obtained as holomorphically induced representations by the general method of \cite{Ne},
since $G$ is a quasi-Hermitian Lie group \cite{CaRimut1}.

Let $\rho$ be a unitary irreducible representation of $K$ on a (finite-dimensional) Hilbert space $V$ and let $\lambda >0$. Let 
$\mathcal H$ the Hilbert space of all holomorphic functions $f: {\mathbb C}^n\rightarrow V$ such that
\begin{equation*}\Vert f\Vert_{\mathcal H}^2
:=\int_{{\mathbb C}^n} \Vert f(z)\Vert_V^2\, e^{-\vert
z\vert^2/2\lambda}\, d\mu_{\lambda}(z)< +\infty.\end{equation*}

Then we can consider the representation $\pi$ of $G$ on $\mathcal H$
defined by
\begin{equation*}({\pi}(g)f)(z)=\exp \left(i\lambda c_0+\tfrac{1}{2}i{\bar z_0}z-\tfrac{\lambda}{4}\vert z_0\vert^2\right)\,\rho(k)\,f(k^{-1}(z+i\lambda z_0)) \end{equation*}
for each $g=(z_0, c_0,k)\in G$ and $z\in {\mathbb C}^n$.

Clearly, we have that ${\mathcal H}={\mathcal H}_0\otimes V$. For $f_0\in {\mathcal H}_0$ and $v\in V$,
we denote by $f_0\otimes v$ the function $z\rightarrow f_0(z)v$. Moreover, if $A_0$ is an operator on 
${\mathcal H}_0$ and $A_1$ is an operator on $V$ then we denote by $A_0\otimes A_1$ the operator
on ${\mathcal H}$ defined by $(A_0\otimes A_1)(f_0\otimes v)=A_0f_0\otimes A_1v$.

Also, we denote by $\tau$ the left-regular representation of $K$ on 
${\mathcal F}_0$, that is, one has $(\tau (k)f_0)(z)=f_0(k^{-1}z)$. Then we have 
\begin{equation}\label{eq:decomppi} \pi(z_0,c_0,k)=\pi_0(z_0,c_0) \tau(k)\otimes \rho(k)\end{equation} for each $z_0\in {\mathbb C}^n$,
$c_0\in {\mathbb R}$ and $k\in K$. This is precisely Formula (3.18) in \cite{BJLR}.

We can easily compute the differential of $\pi$: 

\begin{proposition}\label{redpi}  Let $X=(v,c,A)\in {\mathfrak g}$. Then, for each $f\in {\mathcal H}$ and each $z\in {\mathbb C}^n$, we have
\begin{equation*}(d\pi (X)f)(z)=d\rho(A)f(z)+i(\lambda c+\tfrac{1}{2}{\bar v}z)f(z)+df_z(-Az+i\lambda v)\end{equation*}
or, equivalently,
\begin{equation*}d\pi(X)=(d\pi_0(v,c)+d\tau (A))\otimes \id_V+\id_{{\mathcal H}_0}\otimes d\rho(A).\end{equation*}
\end{proposition}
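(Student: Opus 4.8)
The plan is to differentiate $\pi$ along a one-parameter subgroup $\exp(tX)$ at $t=0$, exploiting the tensor decomposition recorded in Eq.~\ref{eq:decomppi}. Write $X=(v,c,A)$ and let $\exp(tX)=(z_0(t),c_0(t),k(t))$ be its image under the exponential map of $G$; since only the first-order behaviour at $t=0$ matters, the relevant data are the velocities $z_0'(0)=v$, $c_0'(0)=c$, $k(0)=\id$, $k'(0)=A$, together with the base-point values $z_0(0)=0$ and $c_0(0)=0$. Note that, because the projection $G\to K$ is a homomorphism, $k(t)$ is exactly the $K$-exponential $\exp(tA)$.

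First I would apply Eq.~\ref{eq:decomppi} to write $\pi(\exp(tX))=\bigl(\pi_0(z_0(t),c_0(t))\,\tau(k(t))\bigr)\otimes\rho(k(t))$ and differentiate the outer tensor product by the Leibniz rule. Using $\pi_0(0,0)=\id_{{\mathcal H}_0}$, $\tau(\id)=\id_{{\mathcal H}_0}$ and $\tfrac{d}{dt}\big|_{t=0}\rho(k(t))=d\rho(A)$, the second tensor slot contributes exactly $\id_{{\mathcal H}_0}\otimes d\rho(A)$, while the first contributes $\bigl(\tfrac{d}{dt}\big|_{t=0}[\pi_0(z_0(t),c_0(t))\,\tau(k(t))]\bigr)\otimes\id_V$. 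A second application of the Leibniz rule to this inner product, again using that both factors equal the identity at $t=0$, splits it into $d\pi_0(v,c)+d\tau(A)$: the first summand because the velocity of $(z_0(t),c_0(t))$ through the identity of $G_0$ is $(v,c)$, the second because $k'(0)=A$. This yields the second (equivalent) form of the statement directly.

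To pass to the first form I would substitute the known expressions. The formula for $d\pi_0$ recalled in Section~\ref{sec:2} gives $d\pi_0(v,c)f(z)=i(\lambda c+\tfrac{1}{2}{\bar v}z)f(z)+df_z(i\lambda v)$, and differentiating $(\tau(k)f)(z)=f(k^{-1}z)$ gives $d\tau(A)f(z)=df_z(-Az)$, the minus sign arising from $\tfrac{d}{dt}\big|_{t=0}k(t)^{-1}=-A$. Adding these together with the term $d\rho(A)f(z)$, and combining the two directional derivatives into $df_z(-Az+i\lambda v)$, reproduces the explicit formula. As an independent check one may differentiate the defining formula for $(\pi(g)f)(z)$ in place: the derivative of the scalar prefactor produces $i(\lambda c+\tfrac{1}{2}{\bar v}z)f(z)$ (the quadratic term $-\tfrac{\lambda}{4}|z_0|^2$ dropping out because $z_0(0)=0$), the derivative of $\rho(k(t))$ produces $d\rho(A)f(z)$, and the chain rule applied to $f(k(t)^{-1}(z+i\lambda z_0(t)))$ produces $df_z(-Az+i\lambda v)$.

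The computation is routine, so the points requiring care are bookkeeping ones rather than genuine obstacles: keeping track of the minus sign from the inverse $k(t)^{-1}$ in the $d\tau(A)$ term, observing that $z_0(0)=0$ annihilates the contribution of $|z_0|^2$, and justifying termwise differentiation. The last point is harmless, since all operators involved act smoothly on holomorphic (hence smooth) vectors, so the Leibniz rule for the strong derivatives is legitimate on the relevant dense domain; that $f$ is $V$-valued poses no difficulty, as $df_z$ simply denotes the differential of a $V$-valued holomorphic map.
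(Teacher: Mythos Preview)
Your proposal is correct and is precisely the routine computation the paper leaves to the reader: the statement is introduced with ``We can easily compute the differential of $\pi$'' and no proof is given, so your Leibniz-rule argument based on Eq.~\ref{eq:decomppi}, followed by substitution of the known formulas for $d\pi_0$ and $d\tau$, is exactly the intended derivation.
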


In the rest of the paper, we fix $\phi_0$ such that $\rho$ is associated with the orbit $o(\phi_0)$ for the coadjoint action of $K$ as in \cite{CaRimut1},
\cite{Wild2}. Then the orbit $ {\mathcal O}(\xi_0)$ of $\xi_0=(0,\lambda, \phi_0)_{\ast}\in {\mathfrak g}^{\ast}$ for the coadjoint action of $G$ is associated with
$\pi$, see \cite{CaRimut2}.

\section{Berezin quantization for $\pi$} \label{sec:5}

In this section, we show that the Berezin calculus on $\mathcal H$ naturally
provides a diffeomorphism $\Psi:{\mathbb C}^n\times o(\phi_0)\rightarrow
{\mathcal O}(\xi_0)$. We begin with the Berezin calculus on $o(\phi_0)$.

The Berezin
calculus on $o(\phi_0)$ associates with each operator $A_1$ on $V$ a complex-valued
function $s_1(A_1)$ on the orbit $o(\phi_0)$ which is called the symbol of the
operator $A_1$ (see \cite{Be1}). We denote by $Sy(o({\phi_0}))$ the space of all such symbols. 

The following proposition summarizes some well-known properties of the
Berezin calculus, see for instance \cite{ACGc}, \cite{CGR}, \cite{CaS} and
\cite{Wild2}. 
\begin{proposition} \label{proprpetitber}
\begin{enumerate}
\item The map $A_1\rightarrow s_1(A_1)$ is injective.

\item We have $s_1(\id_{V})=1$.

\item  For each operator $A_1$ on $V$, we have $s_1(A_1^{\ast})=\overline
{s_1(A_1)}$.

\item  For each
operator $A_1$ on $V$, $k\in K$ and $\phi \in o(\phi_0)$,
we have
\begin{equation*} s_1(A_1)(\Ad^{\ast}(k)\phi)=s_1(\rho (k)^{-1}A_1 \rho
(k))(\phi).\end{equation*}
\end{enumerate}\end{proposition}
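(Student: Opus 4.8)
The plan is to realize $s_1$ through the coherent states attached to $\rho$ and to reduce everything to properties of these states, exactly as in Proposition \ref{prop:Ber}. Since $\rho$ is the irreducible representation associated with $o(\phi_0)$, I would fix a unit highest weight vector $v_0\in V$ and set, for $\phi=\Ad^{\ast}(k)\phi_0\in o(\phi_0)$, the coherent state $v_\phi:=\rho(k)v_0$. The vector $v_\phi$ is defined only up to a phase, because the stabilizer $K_{\phi_0}$ acts on the line ${\mathbb C}v_0$ by a unitary character; however the rank-one operator $|v_\phi\rangle\langle v_\phi|$ is unambiguous, so the symbol
\[ s_1(A_1)(\phi)=\frac{\langle A_1 v_\phi,v_\phi\rangle}{\langle v_\phi,v_\phi\rangle}=\langle A_1 v_\phi,v_\phi\rangle \]
is a well-defined function on $o(\phi_0)$, and this is the object I would take as the definition.

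Granting this description, items (2), (3) and (4) become one-line verifications mirroring the corresponding items of Proposition \ref{prop:Ber}. Property (2) is just $\langle \id_V v_\phi,v_\phi\rangle=\Vert v_\phi\Vert^2=1$. Property (3) follows from $\langle A_1^{\ast}v_\phi,v_\phi\rangle=\overline{\langle A_1 v_\phi,v_\phi\rangle}$ together with positivity of the denominator. For the covariance (4), I would use the equivariance $v_{\Ad^{\ast}(k)\phi}=\rho(k)v_\phi$ (again up to an irrelevant phase), which follows from $\Ad^{\ast}(k)\Ad^{\ast}(k')\phi_0=\Ad^{\ast}(kk')\phi_0$; unitarity of $\rho(k)$ then gives
\[ s_1(A_1)(\Ad^{\ast}(k)\phi)=\langle A_1\rho(k)v_\phi,\rho(k)v_\phi\rangle=\langle \rho(k)^{-1}A_1\rho(k)v_\phi,v_\phi\rangle=s_1(\rho(k)^{-1}A_1\rho(k))(\phi). \]
None of these three steps uses anything beyond unitarity of $\rho$ and the definition of the coherent states.

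The only substantial point is the injectivity (1), which I expect to be the main obstacle. My preferred route is to imitate item (1) of Proposition \ref{prop:Ber}: regard $o(\phi_0)\cong K/K_{\phi_0}$ as the compact complex manifold carrying the invariant complex structure attached to $\rho$, and view $s_1(A_1)$ as the restriction to the diagonal of the sesquilinear kernel $K_{A_1}(\phi,\psi):=\langle A_1 v_\psi,v_\phi\rangle$, which is holomorphic in $\phi$ and antiholomorphic in $\psi$. A sesqui-holomorphic kernel is determined by its diagonal by analytic continuation, so $s_1(A_1)\equiv 0$ would force $K_{A_1}\equiv 0$; since $\rho$ is irreducible the cyclic span of $v_0$ under $\rho(K)$ is all of $V$, so the coherent states are total and the vanishing of $K_{A_1}$ yields $A_1=0$.

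Equivalently, and perhaps more cleanly in the finite-dimensional setting, injectivity is the assertion that the projectors $\{\rho(k)\,|v_0\rangle\langle v_0|\,\rho(k)^{-1}:k\in K\}$ span $\End(V)$. Their span is a subspace of $\End(V)$ invariant under the conjugation action of $K$, and one must check it exhausts $\End(V)$; this is the one genuinely representation-theoretic input (it amounts to the nonvanishing of the relevant coherent-state matrix coefficients, equivalently to a Clebsch--Gordan positivity for the highest weight vector), and it is exactly the place where I would invoke the references \cite{ACGc}, \cite{CGR}, \cite{CaS}, \cite{Wild2}. Everything else in the proposition is formal.
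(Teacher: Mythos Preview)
Your proposal is correct and is essentially the standard coherent-state argument one finds in the cited literature. Note, however, that the paper does not actually prove this proposition: it is stated as a summary of ``well-known properties of the Berezin calculus'' with references to \cite{ACGc}, \cite{CGR}, \cite{CaS}, \cite{Wild2}, and no proof is given in the text. So there is no ``paper's own proof'' to compare against; you have supplied what the paper deliberately outsourced. Your treatment of (2)--(4) is exactly the expected one, and your two routes to injectivity (diagonal restriction of a sesqui-holomorphic kernel, or the span of the coherent-state projectors under the conjugation action) are precisely the arguments found in those references.
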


The Berezin calculus $S$ for $\pi$ can be defined as follows, \cite{CaRimut2}, \cite{CaBW}. For each operator $A_0$ on ${\mathcal H}_0$
and each operator $A_1$ on $V$, we set
\begin{equation*} S(A_0\otimes A_1):=S_0(A_0)\otimes s_1(A_1)
\end{equation*}
and then we extend $S$ by linearity to operators on $\mathcal H$.

Consider the action of $G$ on 
${\mathbb C}^n\times o(\varphi_0)$ defined by
\begin{equation*} g\cdot (z, \phi):=(kz-i\lambda z_0,\Ad^{\ast}(k)\phi)\end{equation*}
for $g=(z_0,c_0,k)\in G$, $z\in {\mathbb C}^n$ and $\phi \in o(\phi_0)$.
Then we can show that $S$ is $G$-covariant with respect to $\pi$ \cite{CaRimut2}.
Moreover, we have the following result, see \cite{CaRimut1}, \cite{CaRimut2}.

\begin{proposition} \label{symbpi} \begin{enumerate}
\item For each $X=(v,c,A)\in {\mathfrak g}$,
$z\in {\mathbb C}^n$ and 
$\phi\in o(\phi_0)$,  we have
\begin{equation*} S(d\pi (X))(z,\phi)=i\lambda c+\frac{i}{2}\left( {\bar v}z+ v{\bar z}\right)
-\frac{1}{2\lambda}{\bar z}(Az)
+s(d\rho(A))(\varphi).\end{equation*}
\item For each $X\in {\mathfrak g}$,
$z\in {\mathbb C}^n$ and 
$\phi\in o(\phi_0)$,  we have
\begin{equation*} S(d\pi (X))(z,\phi)=i\langle \Psi (z,\phi), X \rangle
\end{equation*}  where the map $\Psi: {\mathbb C}^n\times o(\phi_0)\rightarrow
{\mathfrak g}^{\ast}$ is defined by
\begin{equation*}\Psi (z,\phi)=\left(-iz, \lambda, \phi-\tfrac{1}{2\lambda}z
\times z\right)_{\ast}.\end{equation*}
\item $\Psi$ is a diffeomorphism from ${\mathbb C}^n\times o(\phi_0)$ onto ${\mathcal O}(\xi_0)$.

\item We have, for each $g\in G$, $z\in {\mathbb C}^n$ and 
$\phi\in o(\phi_0)$,
\begin{equation*}\Psi(g\cdot (z, \phi))=\Ad^{\ast}(g)\Psi (z, \phi).
\end{equation*} 
\end{enumerate} \end{proposition}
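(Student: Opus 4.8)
The plan is to prove the four assertions in order, leaning on Proposition \ref{redpi}, the normalizations in Propositions \ref{prop:Ber} and \ref{proprpetitber}, and the symbol formula $S_0(d\pi_0(Y))(z)=i\langle\Phi_{\lambda}(z),Y\rangle$ from Section \ref{sec:2}.

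For (1), I would begin with the decomposition $d\pi(X)=(d\pi_0(v,c)+d\tau(A))\otimes\id_V+\id_{{\mathcal H}_0}\otimes d\rho(A)$ of Proposition \ref{redpi}. Applying $S$ and using linearity together with $S(A_0\otimes A_1)=S_0(A_0)\otimes s_1(A_1)$, the identities $s_1(\id_V)=1$ and $S_0(\id_{{\mathcal H}_0})=1$ reduce the symbol to $S(d\pi(X))(z,\phi)=S_0(d\pi_0(v,c))(z)+S_0(d\tau(A))(z)+s_1(d\rho(A))(\phi)$. The first term equals $i\langle\Phi_{\lambda}(z),(v,c)\rangle=i\lambda c+\tfrac{i}{2}(\bar v z+v\bar z)$ after evaluating $\omega(-iz,v)=\tfrac12(\bar v z+v\bar z)$. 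The second term I would compute straight from the coherent states: since $(d\tau(A)e_z)(w)=-\tfrac{1}{2\lambda}\bar z(Aw)\,e_z(w)$, the reproducing property gives $S_0(d\tau(A))(z)=(d\tau(A)e_z)(z)/e_z(z)=-\tfrac{1}{2\lambda}\bar z(Az)$. Leaving the last term as $s_1(d\rho(A))(\phi)$ yields (1).

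Assertion (2) needs one further input, the moment-map relation $s_1(d\rho(A))(\phi)=i\langle\phi,A\rangle$ on $o(\phi_0)$, valid because $o(\phi_0)$ is the orbit attached to $\rho$ \cite{CaRimut1}, \cite{Wild2}; it is the compact analogue of the $S_0$ formula. Substituting it into (1) and expanding $i\langle\Psi(z,\phi),X\rangle=i\bigl(\omega(-iz,v)+\lambda c+\langle\phi,A\rangle\bigr)-\tfrac{i}{2\lambda}\langle z\times z,A\rangle$, the two expressions agree term by term once one checks $\langle z\times z,A\rangle=-i\,\bar z(Az)$. This follows from $\langle z\times z,A\rangle=\omega(z,Az)=\tfrac{i}{2}\bigl(z\overline{Az}-\bar z(Az)\bigr)$ and the skew-Hermiticity $A^{\ast}=-A$ of $A\in{\mathfrak k}\subset{\mathfrak u}(n)$, which forces $z\overline{Az}=-\bar z(Az)$.

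For (4) I would avoid the brute-force expansion of $\Ad^{\ast}(g)$ and instead derive the covariance of $\Psi$ from the already-known covariance of $S$ for $\pi$ \cite{CaRimut2}. Writing this covariance as $S(d\pi(X))(g\cdot(z,\phi))=S(\pi(g)^{-1}d\pi(X)\pi(g))(z,\phi)=S(d\pi(\Ad(g^{-1})X))(z,\phi)$ and applying (2) to both sides gives $\langle\Psi(g\cdot(z,\phi)),X\rangle=\langle\Psi(z,\phi),\Ad(g^{-1})X\rangle=\langle\Ad^{\ast}(g)\Psi(z,\phi),X\rangle$ for all $X\in{\mathfrak g}$, which is exactly (4). (A direct check is also possible, using $\omega(ku,kv)=\omega(u,v)$, the equivariance $(ku)\times(kv)=\Ad^{\ast}(k)(u\times v)$ and the symmetry $u\times v=v\times u$ on ${\mathfrak k}$; the delicate point is that $\times$ is only ${\mathbb R}$-bilinear, so the cross terms in $(kz-i\lambda z_0)\times(kz-i\lambda z_0)$ must be expanded with care.) Then for (3): injectivity and smoothness of $\Psi$ are clear from its formula, since $-iz$ recovers $z$ and the third slot then recovers $\phi$. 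The $G$-action $g\cdot(z,\phi)=(kz-i\lambda z_0,\Ad^{\ast}(k)\phi)$ is transitive on ${\mathbb C}^n\times o(\phi_0)$ (choose $k$ with $\Ad^{\ast}(k)\phi_0=\phi$ and $z_0=iz/\lambda$ to reach $(z,\phi)$ from $(0,\phi_0)$), and $\Psi(0,\phi_0)=(0,\lambda,\phi_0)_{\ast}=\xi_0$; by the equivariance (4) the image of $\Psi$ is therefore the single orbit ${\mathcal O}(\xi_0)$ and $\Psi$ intertwines the two transitive actions. An equivariant smooth bijection between homogeneous $G$-spaces is a diffeomorphism (injectivity makes the stabilizers of $(0,\phi_0)$ and $\xi_0$ coincide), which gives (3). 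The main obstacle is the careful bookkeeping of the $\omega$- and $\times$-identities underlying (2) and (4); with these in hand, (3) is essentially formal.
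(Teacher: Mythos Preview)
The paper does not actually prove Proposition~\ref{symbpi}; it defers to \cite{CaRimut1} and \cite{CaRimut2}. Your argument is correct and self-contained within the present paper, so in effect you have supplied what the paper only cites. The computations in (1) and (2) are sound: the key identities $\omega(-iz,v)=\tfrac12(\bar v z+v\bar z)$ and $\langle z\times z,A\rangle=\omega(z,Az)=-i\,\bar z(Az)$ are handled correctly via $A^{\ast}=-A$, and the coherent-state evaluation of $S_0(d\tau(A))$ is clean. Your derivation of (4) from the covariance of $S$ together with (2) is an elegant shortcut compared with a direct verification via the explicit coadjoint formula; the only external input you invoke beyond this paper is the standard moment relation $s_1(d\rho(A))(\phi)=i\langle\phi,A\rangle$, which is exactly the compact-group analogue the paper alludes to when citing \cite{Wild2}. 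The homogeneous-space argument for (3) is the natural one.
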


\section{Stratonovich-Weyl correspondence for $\pi$}\label{sec:6}

Here we construct a Stratonovich-Weyl correspondence for $\pi$ by combining the Weyl calculus on ${\mathbb C}^n$ and a
Stratonovich-Weyl correspondence for $\rho$.

We fix an invariant measure $\nu$ on $o(\phi_0)$ and consider $Sy(o(\varphi_0))$
as a (finite dimensional) subspace of $L^2(o(\phi_0),\nu)$.
On the other hand, we can equip $\End (V)$ with the Hilbert-Schmid norm.
Then
we can consider the unitary part $w_1$ in the polar decomposition of $s_1: \End (V)\rightarrow Sy(o(\phi_0))$. We immediately see that $w_1$ inherits some properties from $s_1$ and that $w_1$ is a 
Stratonovich-Weyl correspondence for $(K, \rho, o(\phi_0))$ \cite{FGBV},
\cite{CaPad}. Moreover, for each $\phi \in o(\phi_0)$, there exists a unique
$\omega_1(\phi)\in \End (V)$ such that
\begin{equation*}\label{eq:quantw
1} w_1(A_1)(\varphi)=\Tr(A_1\omega_1(\phi))\end{equation*}
for each $A_1\in \End (V)$.

Recall that such a map $\phi \rightarrow \omega_1(\phi)$ is called a  Stratonovich-Weyl quantizer and that the properties of $w_1$ are reflected by similar properties of $\omega_1$, see for instance \cite{GB}. In particular, the covariance property of $w_1$
is equivalent to the fact that for each $k\in K$ and $\phi\in o(\phi_0)$,
we have
\begin{equation*}\label{eq:covomega1} \omega_1(\Ad^{\ast}(k) \phi)=\rho(k)\omega_1(\phi)\rho(k)^{-1}.\end{equation*}

In the rest of this paper, we fix a section (defined on a dense open subset of $o(\phi_0)$) $\phi \rightarrow k_{\phi}$ for the action of $K$ on $o(\phi_0)$, see \cite{CaPar}. Then we have
\begin{equation*}\label{eq:defomega1} \omega_1( \phi)=\rho(k_{\phi})\omega_1(\phi_0)\rho(k_{\phi})^{-1}.\end{equation*}

By analogy with the Berezin calculus $S$, we define for each (suitable) operator $A_0$
on ${\mathcal  H}_0$ and each operator $A_1$ on $V$, the function
$W(A_0\otimes A_1)$ on ${\mathbb C}^n\times o(\phi_0)$ by
\begin{equation*}W(A_0\otimes A_1)(z,\phi):=W(A_0)(z)w_1(A_1)(\phi).
\end{equation*}
By Section \ref{sec:3}, this definition works in particular when $A_0$ is trace-class (or more generally Hilbert-Schmidt) and also when $A_0$ is a differential operator with polynomial coefficients. Of course, we can extend $W$ to finite sums of  operators of the form $A_0\otimes A_1$. In order to prove that $W$
is a Stratonovich-Weyl correspondence, we need the following lemma.

\begin{lemma} \label{lem:W0tau}
For each trace-class operator $A_0$ on ${\mathcal  H}_0$, $k\in K$ and
$z\in {\mathbb C}^n$ , we have 
\begin{equation*}W_0(\tau (k)^{-1}A_0\tau (k))(z)=W(A_0)(kz).
\end{equation*}\end{lemma}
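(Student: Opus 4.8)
The plan is to exploit the fact that $\tau(k)$ is the geometric action of $K$ on the Fock space $\mathcal H_0$, so conjugating $A_0$ by $\tau(k)$ should correspond to rotating the argument $z$ by $k$, which is exactly the covariance phenomenon already encoded in the quantizer $\Omega_0$. The key observation I would isolate first is that $\tau(k)$ and $\Omega_0(z)$ are compatible in the sense that $\tau(k)\,\Omega_0(z)\,\tau(k)^{-1}=\Omega_0(kz)$. This is the analogue for the pure $K$-rotation of the covariance identity \eqref{eq:Om0}, which was stated there for the full Heisenberg action $g\cdot z$. To verify it I would simply apply both sides to an arbitrary $f\in\mathcal H_0$ and use the explicit formula \eqref{eq:omega0}: since $(\tau(k)^{-1}f)(w)=f(kw)$ and $k\in U(n)$ preserves the Hermitian form (so $\overline{(kz)}\,w=\bar z\,(k^{-1}w)$ and $|kz|^2=|z|^2$), the exponential factor and the translated argument transform correctly, and the factor $2^n$ is unchanged.

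Granting this intertwining relation, the lemma follows by a one-line trace manipulation. I would write
\begin{align*}
W_0(\tau(k)^{-1}A_0\tau(k))(z)
&=\Tr\bigl(\tau(k)^{-1}A_0\tau(k)\,\Omega_0(z)\bigr)\\
&=\Tr\bigl(A_0\,\tau(k)\,\Omega_0(z)\,\tau(k)^{-1}\bigr)\\
&=\Tr\bigl(A_0\,\Omega_0(kz)\bigr)=W_0(A_0)(kz),
\end{align*}
where the second equality is the cyclicity of the trace (valid since $A_0$ is trace-class and $\tau(k)$ is unitary), the third is the intertwining identity, and the last is the definition of $W_0$. Note that the statement of the lemma abbreviates the right-hand side as $W(A_0)(kz)$, but on a single tensor factor $\mathcal H_0$ this means precisely $W_0(A_0)(kz)$, so no ambiguity arises.

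The only genuine work is establishing the intertwining identity $\tau(k)\,\Omega_0(z)\,\tau(k)^{-1}=\Omega_0(kz)$, and I expect this to be the main (though modest) obstacle: one must check that the phase $\exp\bigl(\tfrac{1}{\lambda}(w\bar z-|z|^2)\bigr)$ in \eqref{eq:omega0} and the reflected argument $2z-w$ both transform consistently under the substitution $w\mapsto kw$, using unitarity of $k$ at each step. Conceptually this is exactly the covariance of $\Omega_0$ restricted to the rotation part of the Heisenberg motion group, so it is reasonable to expect that it can also be deduced directly from \eqref{eq:Om0} by an appropriate choice of group element together with the relation $\pi(z_0,c_0,k)=\pi_0(z_0,c_0)\tau(k)\otimes\rho(k)$ from \eqref{eq:decomppi}; if so, I would prefer that route, as it avoids recomputing integrals and makes clear that the lemma is simply the $z_0=0$ case of the covariance already built into the construction.
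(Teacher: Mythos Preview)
Your proof is correct, but it proceeds differently from the paper. The paper works at the level of integral kernels: it observes that the kernel of $\tau(k)^{-1}A_0\tau(k)$ is $(z,w)\mapsto k_{A_0}(kz,kw)$ and then feeds this into the integral formula of Proposition~\ref{prop:intW0}, where the unitary change of variables $w\mapsto kw$ (together with $|kz|=|z|$ and the invariance of $d\mu_{\lambda}$) immediately yields $W_0(A_0)(kz)$. Your route is more operator-theoretic: you establish the quantizer covariance $\tau(k)\,\Omega_0(z)\,\tau(k)^{-1}=\Omega_0(kz)$ directly from \eqref{eq:omega0} and then conclude by cyclicity of the trace. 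Your approach is arguably cleaner, since it uses the defining relation $W_0(A)(z)=\Tr(A\Omega_0(z))$ without passing through the derived integral formula, and it makes transparent that the lemma is exactly the $K$-part of the covariance of $\Omega_0$ (extending \eqref{eq:Om0} from $G_0$ to the full motion group). The paper's route, on the other hand, keeps the argument parallel to the kernel computations used elsewhere (e.g.\ Proposition~\ref{prop:diff0}) and avoids having to verify the identity $(k^{-1}w)\bar z=w\,\overline{kz}$ for the bilinear form $zw=\sum z_iw_i$ explicitly. Your remark that the right-hand side should read $W_0(A_0)(kz)$ rather than $W(A_0)(kz)$ is also correct; this is a typographical slip in the statement.
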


\begin{proof} For each trace-class operator $A_0$ on ${\mathcal  H}_0$ and each $k\in K$, we can verify that the kernel of $\tau (k)^{-1}A_0\tau (k)$ is $(z,w)\rightarrow k_A(kz,kw)$. The result then follows from Proposition \ref{prop:intW0}. \end{proof}

Now we have the following result.

\begin{proposition}\label{prop:Wswc} W is a Stratonovich-Weyl correspondence for $(G,\pi, {\mathbb C}^n\times o(\phi_0))$. \end{proposition}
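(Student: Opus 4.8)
The plan is to verify the three defining properties of a Stratonovich-Weyl correspondence on elementary tensors $A=A_0\otimes A_1$ and then extend by (bi)linearity, exploiting throughout the factorization $W=W_0\otimes w_1$ together with the decomposition \eqref{eq:decomppi} of $\pi$. Since $V$ is finite-dimensional, every Hilbert-Schmidt operator on $\mathcal H=\mathcal H_0\otimes V$ is a \emph{finite} sum of such elementary tensors, so no convergence issue arises in passing from elementary tensors to general operators. The isomorphism requirement then follows at once: $W_0$ is a unitary isomorphism of $\mathcal L_2(\mathcal H_0)$ onto $L^2(\mathbb C^n,\mu_{\lambda})$ (Section \ref{sec:3}) and $w_1$ is an isomorphism of $\End(V)$ onto $Sy(o(\phi_0))$, so $W=W_0\otimes w_1$ is an isomorphism of $\mathcal L_2(\mathcal H_0)\otimes\End(V)=\mathcal L_2(\mathcal H)$ onto $L^2(\mathbb C^n,\mu_{\lambda})\otimes Sy(o(\phi_0))$, viewed as a subspace of $L^2(\mathbb C^n\times o(\phi_0),\mu_{\lambda}\otimes\nu)$. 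I would also record that this product measure is $G$-invariant, since $\mu_{\lambda}$ is invariant under the affine unitary maps $z\mapsto kz-i\lambda z_0$ and $\nu$ is $K$-invariant.

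The reality property is immediate: from $(A_0\otimes A_1)^{\ast}=A_0^{\ast}\otimes A_1^{\ast}$ and the fact that both $W_0$ and $w_1$ send adjoints to complex conjugates (Section \ref{sec:3} and Proposition \ref{proprpetitber}(3)), one gets $W(A^{\ast})=\overline{W(A)}$ on elementary tensors; this propagates to finite sums because $A\mapsto A^{\ast}$ is conjugate-linear while $W$ is linear. The traciality property reduces to the two known tracialities: for $A=A_0\otimes A_1$ and $B=B_0\otimes B_1$ the integrand $W(A)W(B)$ factors over the product $\mathbb C^n\times o(\phi_0)$, so Fubini yields the product of $\int_{\mathbb C^n}W_0(A_0)W_0(B_0)\,d\mu_{\lambda}=\Tr(A_0B_0)$ and $\int_{o(\phi_0)}w_1(A_1)w_1(B_1)\,d\nu=\Tr(A_1B_1)$. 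Their product equals $\Tr(A_0B_0)\Tr(A_1B_1)=\Tr\big((A_0\otimes A_1)(B_0\otimes B_1)\big)=\Tr(AB)$, and bilinearity finishes the argument.

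The covariance property is the step I expect to be the main obstacle, since it is where the Heisenberg and compact parts genuinely interact. For $g=(z_0,c_0,k)$ I would use \eqref{eq:decomppi} to write $\pi(g)(A_0\otimes A_1)\pi(g)^{-1}=\big(\pi_0(z_0,c_0)\tau(k)A_0\tau(k)^{-1}\pi_0(z_0,c_0)^{-1}\big)\otimes\big(\rho(k)A_1\rho(k)^{-1}\big)$, so that $W$ of this operator factors. On the first tensor factor I would apply the $\pi_0$-covariance of $W_0$ to remove the $\pi_0(z_0,c_0)$-conjugation (replacing the argument $z$ by $z+i\lambda z_0$), and then Lemma \ref{lem:W0tau}, used with $k^{-1}$ in place of $k$, to convert the remaining $\tau(k)$-conjugation into evaluation at $k^{-1}(z+i\lambda z_0)$; on the second factor the covariance of $w_1$ turns the $\rho(k)$-conjugation into evaluation at $\Ad^{\ast}(k^{-1})\phi$. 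Independently I would compute the group inverse $g^{-1}=(-k^{-1}z_0,-c_0,k^{-1})$ and feed it into the action $g\cdot(z,\phi)=(kz-i\lambda z_0,\Ad^{\ast}(k)\phi)$ to obtain $g^{-1}\cdot(z,\phi)=(k^{-1}(z+i\lambda z_0),\Ad^{\ast}(k^{-1})\phi)$.

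Comparing the two expressions then gives $W(\pi(g)A\pi(g)^{-1})(z,\phi)=W(A)(g^{-1}\cdot(z,\phi))$ on elementary tensors, whence on all operators by linearity. The delicate points here are essentially bookkeeping: keeping the direction of the $K$-action (the distinction between $k$ and $k^{-1}$) consistent across Lemma \ref{lem:W0tau}, the covariance of $w_1$, and the explicit inverse in $G$, so that both sides land at the same point of $\mathbb C^n\times o(\phi_0)$. No analytic difficulty is expected beyond verifying that the elementary-tensor identities extend by the finite-sum argument justified above.
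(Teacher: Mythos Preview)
Your proof is correct and follows essentially the same route as the paper: reduce to elementary tensors, use the factorization \eqref{eq:decomppi}, and combine the $\pi_0$-covariance of $W_0$, Lemma \ref{lem:W0tau}, and the covariance of $w_1$ to obtain the covariance of $W$. The only cosmetic differences are that the paper conjugates by $\pi(g)^{-1}$ (landing at $g\cdot(z,\phi)$) whereas you conjugate by $\pi(g)$ (landing at $g^{-1}\cdot(z,\phi)$), and you spell out the traciality via Fubini while the paper simply invokes unitarity of $W_0\otimes w_1$; both are equivalent and your more explicit bookkeeping on $k$ versus $k^{-1}$ is accurate.
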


\begin{proof} From the properties of $W_0$ and $w_1$, we see immediately
that $W$ is unitary and satisfies the property that $W(A^{\ast})=\overline {W(A)}$ for $A$ operator on $\mathcal H$. It remains to prove that $W$ is
covariant with respect to $\pi$. We have just to consider the case where
$A=A_0\otimes A_1$ with $A_0$ operator on $\mathcal H$ and $A_1$ operator on $V$. Let $g=(z_0,c_0,k)\in G$. By using Eq. \ref{eq:decomppi}, we see that
\begin{equation*} W(\pi (g)^{-1}A\pi (g))=W_0(\tau(k)^{-1}\pi_0(z_0,c_0)^{-1}A_0\pi_0(z_0,c_0)\tau(k))\otimes
w_1(\rho(k)^{-1}A_1\rho(k)).\end{equation*}

Then, by using Lemma \ref{lem:W0tau} and the covariance of $W_0$ and $w_1$, we have, for each $(z,\phi)\in {\mathbb C}^n\times o(\phi_0)$,
\begin{align*}W(\pi (g)^{-1}A\pi (g))&(z,\phi)=
W_0(\pi_0(z_0,c_0)^{-1}A_0\pi_0(z_0,c_0))(kz)w_1(A_1)(\Ad^{\ast}(k)\phi)\\&=W_0(A_0)((z_0,c_0)\cdot kz)w_1(A_1)(\Ad^{\ast}(k)\phi)=
W(A)(g\cdot (z,\phi)).
\end{align*}
This ends the proof.
\end{proof}

Now, we aim to identify the Stratonovich-Weyl quantizer associated with $W$. We need the section for the action of $G$ on ${\mathbb C}^n\times o(\phi_0)$ defined by
\begin{equation*}(z,\phi)\rightarrow g_{(z,\phi)}:=({\lambda}^{-1}iz, 0,k_{\phi})\in G.\end{equation*} Recall that $\phi\rightarrow k_{\phi }$ is
a section for the action of $K$ on $o(\phi_0)$ and that $z\rightarrow g_z=
({\lambda}^{-1}iz, 0)\in G_0$ is a section for the action of $G_0$ on 
${\mathbb C}^n$.

Recall also that $R_0$ denotes the parity operator on ${\mathcal H}_0$,
that is, $(R_0f_0)(z)=2^nf_0(-z)$. Then we introduce the operator $R$
on $\mathcal H$ defined by $R:=R_0\otimes \omega_1(\phi_0)$ and
we define the Stratonovich-Weyl quantizer $\Omega$ by
\begin{equation*}\Omega(g\cdot (0,\phi_0)):=\pi(g)R\pi(g)^{-1}.\end{equation*}
This definition makes sense since $R$ commutes with $\pi(g)$ for each $g$
in the stabilizer $G_{(0,\phi_0)}$ of $(0,\phi_0)$ in $G$, that is, for each $g$ of the form
$(0,c_0,k)$ with $c_0\in {\mathbb R}$ and $k$ in the stabilizer of $\phi_0$ in $K$. Note that $G_{(0,\phi_0)}$ is also the stabilizer of $\xi_0$ for the coadjoint action of $G$.

In particular, we have, for each $(z,\phi)\in {\mathbb C}^n\times o(\phi_0)$,
\begin{equation*}\Omega(z,\phi)=\pi(g_{(z,\phi)})R\pi(g_{(z,\phi)})^{-1}.\end{equation*}

\begin{proposition} \label{prop:quantW} \begin{enumerate}
\item For each $(z,\phi)\in {\mathbb C}^n\times o(\phi_0)$, we have
\begin{equation*}\Omega(z,\phi)=\Omega_0(z)\otimes \omega_1(\phi);\end{equation*}
\item $\Omega$ is the quantizer associated with $W$, that is, for each trace-class operator $A$ on $\mathcal H$ and each $(z,\phi)\in {\mathbb C}^n\times o(\phi_0)$, we have 
\begin{equation*}W(A)(z,\phi)=\Tr(A\Omega(z,\phi)).\end{equation*}
\end{enumerate}\end{proposition}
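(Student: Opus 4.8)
The plan is to prove the two assertions in turn, reducing everything to the tensor-factorized structure of $\pi$ recorded in Eq.~\ref{eq:decomppi}, together with the already-established covariance properties of $\Omega_0$ (Section \ref{sec:3}) and of $\omega_1$.

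For part (1), I would start from the definition $\Omega(z,\phi)=\pi(g_{(z,\phi)})R\,\pi(g_{(z,\phi)})^{-1}$ with $g_{(z,\phi)}=(\lambda^{-1}iz,0,k_{\phi})$ and $R=R_0\otimes\omega_1(\phi_0)$. Using Eq.~\ref{eq:decomppi} and the fact that $g_z=(\lambda^{-1}iz,0)$ is the section from Section \ref{sec:2}, I would write
\[
\pi(g_{(z,\phi)})=\bigl(\pi_0(g_z)\tau(k_{\phi})\bigr)\otimes\rho(k_{\phi}).
\]
Since the inverse and the conjugation both respect the tensor decomposition $(B_0\otimes B_1)(C_0\otimes C_1)=(B_0C_0)\otimes(B_1C_1)$, this yields
\[
\Omega(z,\phi)=\bigl(\pi_0(g_z)\tau(k_{\phi})R_0\tau(k_{\phi})^{-1}\pi_0(g_z)^{-1}\bigr)\otimes\bigl(\rho(k_{\phi})\omega_1(\phi_0)\rho(k_{\phi})^{-1}\bigr).
\]
The second tensor factor is exactly $\omega_1(\phi)$ by the defining formula for the quantizer section. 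For the first factor, the key observation is that $R_0$ commutes with $\tau(k)$ for every $k\in K$: since $k$ acts linearly, one checks directly that $(\tau(k)R_0f_0)(w)=2^nf_0(-k^{-1}w)=(R_0\tau(k)f_0)(w)$. Hence $\tau(k_{\phi})R_0\tau(k_{\phi})^{-1}=R_0$, the first factor collapses to $\pi_0(g_z)R_0\pi_0(g_z)^{-1}=\Omega_0(z)$, and we obtain $\Omega(z,\phi)=\Omega_0(z)\otimes\omega_1(\phi)$.

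For part (2), I would first reduce to elementary tensors. Because $V$ is finite-dimensional, every trace-class operator $A$ on $\mathcal H=\mathcal H_0\otimes V$ is a \emph{finite} sum of operators $A_0\otimes A_1$ with $A_0$ trace-class on $\mathcal H_0$ and $A_1\in\End(V)$ (expand $A$ against the matrix units of $\End(V)$). By linearity of both $W$ and $A\mapsto\Tr(A\,\Omega(z,\phi))$, it therefore suffices to treat $A=A_0\otimes A_1$. Using part (1) and the multiplicativity of the trace on tensor products,
\[
\Tr\bigl((A_0\otimes A_1)(\Omega_0(z)\otimes\omega_1(\phi))\bigr)=\Tr\bigl(A_0\Omega_0(z)\bigr)\,\Tr\bigl(A_1\omega_1(\phi)\bigr)=W_0(A_0)(z)\,w_1(A_1)(\phi),
\]
where the last equality is the definition of $W_0$ together with the quantizer property of $\omega_1$. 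The right-hand side is precisely $W(A_0\otimes A_1)(z,\phi)$ by the definition of $W$, which completes the proof.

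The computations are entirely routine; the only point requiring genuine care is the commutation $\tau(k)R_0=R_0\tau(k)$ in part (1)---this is what makes the conjugation split cleanly across the two tensor factors---and, in part (2), the observation that the finite-dimensionality of $V$ turns the reduction to elementary tensors into a \emph{finite} decomposition, so that no approximation or continuity argument is needed.
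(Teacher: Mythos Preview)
Your proof is correct and follows essentially the same route as the paper: for (1) you factor $\pi(g_{(z,\phi)})$ via Eq.~\ref{eq:decomppi}, use that $\tau(k)$ commutes with $R_0=\Omega_0(0)$, and identify the two tensor factors as $\Omega_0(z)$ and $\omega_1(\phi)$; for (2) you reduce to elementary tensors and use multiplicativity of the trace. Your write-up is in fact slightly more careful than the paper's in justifying the commutation $\tau(k)R_0=R_0\tau(k)$ and in explaining why the reduction to elementary tensors is a finite one.
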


\begin{proof} (1) First, note that for each $k\in K$, we have
\begin{equation*}\tau(k)^{-1}\Omega_0(0)\tau(k)=\Omega_0(0).\end{equation*}

Now, let $(z,\phi)\in {\mathbb C}^n\times o(\phi_0)$. We have
\begin{align*}\Omega (z,\phi)=&\pi(g_{(z,\phi)})R\pi(g_{(z,\phi)})^{-1}\\
=&(\pi_0(g_z)\tau(k_{\phi})\otimes \rho(k_{\phi}))(\Omega_0(0)\otimes
\omega_1(\phi_0))(\tau(k_{\phi})^{-1}\pi_0(g_z)^{-1}\otimes \rho(k_{\phi})^{-1})\\
=&(\pi_0(g_z)\tau(k_{\phi})\Omega_0(0)\tau(k_{\phi})^{-1}\pi_0(g_z)^{-1})
\otimes (\rho(k_{\phi})\omega_1(\phi_0)\rho(k_{\phi})^{-1})\\
=&(\pi_0(g_z)\Omega_0(0)\pi_0(g_z)^{-1})\otimes \omega_1(\phi)\\
=&\Omega_0(z)\otimes \omega_1(\phi).
\end{align*}

(2) We have just to consider the case where $A=A_0\otimes A_1$ where $A_0$ is a trace-class operator on ${\mathcal H}_0$. Then we have

\begin{align*}
W(A)(z,\phi)=& \Tr(A_0\otimes A_1)\Tr(\Omega_0(z)\otimes \omega_1(\phi))\\
=&\Tr(A_0\Omega_0(z)\otimes A_1\omega_1(\phi))\\
=&\Tr(A_0\Omega_0(z))\Tr(A_1\omega_1(\phi))\\
=&W_0(A_0)(z)w_1(A_1)(\phi)\\
=&W(A)(z,\phi).
\end{align*} This ends the proof.
\end{proof}

Note that Proposition \ref{prop:quantW} also gives a construction of the
Stratonovich-Weyl correspondence for $\pi$ in the spirit of the general method
of \cite{GB} and \cite{AMO}. We can also give an integral expression of $W(A)$ and then extend Proposition \ref{prop:intW0}. Note that each
Hilbert-Schmidt operator $A$ on $\mathcal H$ has a kernel $K_A:(z,w)\rightarrow K_A(z,w)\in \End(V)$ so that, for each $f\in {\mathcal H}$
and each $z \in {\mathbb C}^n$, we have
\begin{equation*}(Af)(z)=
\int_{{\mathbb C}^n}K_A(z,w)f(w)\,e^{-\vert
w\vert^2/2\lambda}d\mu_{\lambda}(w).
\end{equation*} 

\begin{proposition}  \label{prop:intW}
For each trace-class operator $A$ on ${\mathcal H}$
and each $(z,\phi)\in {\mathbb C}^n\times o(\phi_0)$, we have
\begin{equation*}
W(A)(z,\phi)=2^n\int_{{\mathbb C}^n}w_1(K_A(w,2z-w))(\phi)\exp \left(\tfrac{1}{\lambda}\left(-z{\bar z}+z{\bar w}-\tfrac{1}{2}w{\bar w}\right)\right) d\mu_{\lambda}(w). 
\end{equation*}
\end{proposition}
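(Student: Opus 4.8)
The plan is to reduce the general statement to the tensor-product case already understood, and then combine the integral formula for $W_0$ from Proposition \ref{prop:intW0} with the definition of $W$ on decomposable operators. Since both sides of the claimed identity are linear in $A$, and since every Hilbert-Schmidt (in particular every trace-class) operator on ${\mathcal H}={\mathcal H}_0\otimes V$ is a limit of finite sums of decomposable operators $A_0\otimes A_1$, it suffices to establish the formula for $A=A_0\otimes A_1$ with $A_0$ trace-class on ${\mathcal H}_0$ and $A_1\in\End(V)$. The main point to check at the outset is the behaviour of the kernel under this decomposition: for $A=A_0\otimes A_1$ one has $K_A(z,w)=k_{A_0}(z,w)\,A_1$, where $k_{A_0}$ is the scalar kernel of $A_0$ and $A_1\in\End(V)$ is viewed as a constant operator-valued factor.

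First I would write down the definition $W(A_0\otimes A_1)(z,\phi)=W_0(A_0)(z)\,w_1(A_1)(\phi)$ and insert the integral expression for $W_0(A_0)(z)$ furnished by Proposition \ref{prop:intW0}, namely
\begin{equation*}
W_0(A_0)(z)=2^n\int_{{\mathbb C}^n}k_{A_0}(w,2z-w)\exp\left(\tfrac{1}{\lambda}\left(-z{\bar z}+z{\bar w}-\tfrac{1}{2}w{\bar w}\right)\right)d\mu_{\lambda}(w).
\end{equation*}
Multiplying through by the scalar $w_1(A_1)(\phi)$ and pulling it inside the integral, I would then use the linearity of $w_1$ together with the kernel identity $K_A(w,2z-w)=k_{A_0}(w,2z-w)\,A_1$ to recognise the integrand as $w_1\bigl(K_A(w,2z-w)\bigr)(\phi)$ times the same Gaussian exponential factor. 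This matches the asserted formula exactly in the decomposable case.

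The remaining step is the passage from decomposable operators to arbitrary trace-class $A$ on ${\mathcal H}$. Here I would argue by linearity and continuity: both $A\mapsto W(A)(z,\phi)$ and the right-hand integral are linear in $A$ and depend continuously on $A$ in the Hilbert-Schmidt (hence trace-class) norm, because $W_0$ is bounded on ${\mathcal L}_2({\mathcal H}_0)$ by Proposition \ref{prop:Ber}(5), $w_1$ is bounded on the finite-dimensional space $\End(V)$, and the map $A\mapsto K_A$ is isometric from the Hilbert-Schmidt operators to the kernel space. Since finite sums of decomposable operators are dense in ${\mathcal L}_2({\mathcal H})={\mathcal L}_2({\mathcal H}_0)\otimes\End(V)$, the identity extends to all of ${\mathcal L}_2({\mathcal H})$ and in particular to trace-class operators.

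I expect the only genuine obstacle to be the interchange of the finite-dimensional $w_1$-contraction with the integration over ${\mathbb C}^n$, i.e. justifying that $w_1\bigl(\int K_A(w,2z-w)(\cdots)\,d\mu_{\lambda}(w)\bigr)(\phi)=\int w_1\bigl(K_A(w,2z-w)\bigr)(\phi)(\cdots)\,d\mu_{\lambda}(w)$. Since $V$ is finite-dimensional, $w_1$ is a bounded linear map on $\End(V)$, so this is simply linearity of a continuous linear functional under a Bochner integral and poses no real difficulty; one only needs the integrability of $w\mapsto K_A(w,2z-w)$ against the Gaussian weight, which follows from the trace-class (Hilbert-Schmidt) hypothesis on $A$ exactly as in the proof of Proposition \ref{prop:intW0}.
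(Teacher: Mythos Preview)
Your proof is correct and follows the same approach as the paper: reduce to decomposable operators $A=A_0\otimes A_1$, observe that $K_A(z,w)=k_{A_0}(z,w)\,A_1$, and apply Proposition~\ref{prop:intW0}. Your version is more careful---spelling out the density/continuity argument and the interchange of $w_1$ with the integral---but the paper's proof is precisely this argument in compressed form.
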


\begin{proof} As usual, it is sufficient to consider the case where $A=A_0\otimes A_1$ where $A_0$ is a trace-class operator on ${\mathcal H}_0$ and $A_1$ an operator on $V$. Clearly, we have $K_A(z,w)=k_{A_0}(z,w)A_1$ and then for each
$(z,\phi)\in {\mathbb C}^n\times o(\phi_0)$ we have
\begin{equation*}W(A)(z,\phi)=W_0(A_0)(z)w_1(A_1)(\phi).\end{equation*}
Hence the result follows from Proposition \ref{prop:intW0}.
\end{proof}

Note that, since $W_0$ is the unitary part in the polar decomposition of $S_0:{\mathcal L}_2({\mathcal H}_0)\rightarrow L^2({\mathbb C}^n,\mu_{\lambda})$, see Section \ref{sec:3}, we have that $W$ is
the unitary part in the polar decomposition of $S:{\mathcal L}_2({\mathcal H})\rightarrow L^2({\mathbb C}^n,\mu_{\lambda})\otimes V$ .
It is also clear that since the Bargmann transform connects $W_0$ to $W_1$
(see Section \ref{sec:3}), it also connects $W$ to the Stratonovich-Weyl correspondence constructed in \cite{CaRimut2}.

We can immediately deduce a Stratonovich-Weyl correspondence for $(G,\pi, {\mathcal O}(\xi_0))$ from $W$. Indeed, let ${\mathcal W}$ be the map
defined by ${\mathcal W}(A)=W(A)\circ \Psi^{-1}$ for each trace-class operator $A$ on $\mathcal H$. Then, as an immediate consequence of 
Proposition \ref{prop:Wswc}, we have the following result.

\begin{proposition} ${\mathcal W}$ is a Stratonovich-Weyl correspondence for $(G,\pi, {\mathcal O}(\xi_0))$. \end{proposition}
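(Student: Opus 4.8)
The plan is to transport all three defining properties of a Stratonovich-Weyl correspondence from $W$ on ${\mathbb C}^n\times o(\phi_0)$ to ${\mathcal W}$ on ${\mathcal O}(\xi_0)$ along the equivariant diffeomorphism $\Psi$, relying on Proposition \ref{prop:Wswc} together with the properties of $\Psi$ collected in Proposition \ref{symbpi}. Since ${\mathcal W}(A)=W(A)\circ\Psi^{-1}$ by definition, the reality property is immediate: for every trace-class $A$,
\[
{\mathcal W}(A^{\ast})=W(A^{\ast})\circ\Psi^{-1}=\overline{W(A)}\circ\Psi^{-1}=\overline{{\mathcal W}(A)},
\]
the middle equality being property (1) for $W$.

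First I would fix the invariant measure $\mu$ on ${\mathcal O}(\xi_0)$ to be the pushforward $\Psi_{\ast}(\mu_{\lambda}\otimes\nu)$ of the product of the normalized measure $\mu_{\lambda}$ on ${\mathbb C}^n$ and the invariant measure $\nu$ on $o(\phi_0)$. The point to verify here is that $\mu_{\lambda}\otimes\nu$ is invariant under the $G$-action $g\cdot(z,\phi)=(kz-i\lambda z_0,\Ad^{\ast}(k)\phi)$: this holds because $z\mapsto kz-i\lambda z_0$ preserves $\mu_{\lambda}$ (a unitary rotation composed with a translation) while $\phi\mapsto\Ad^{\ast}(k)\phi$ preserves $\nu$, the two substitutions decoupling cleanly. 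Since $\Psi$ intertwines this action with the coadjoint action by Proposition \ref{symbpi}(4), the measure $\mu=\Psi_{\ast}(\mu_{\lambda}\otimes\nu)$ is then a $G$-invariant measure on ${\mathcal O}(\xi_0)$, so the triple $(G,\pi,{\mathcal O}(\xi_0))$ is correctly equipped.

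For covariance I would chain the covariance of $W$ with the equivariance of $\Psi$. For $\xi\in{\mathcal O}(\xi_0)$ and $g\in G$,
\[
{\mathcal W}(\pi(g)A\pi(g)^{-1})(\xi)=W(\pi(g)A\pi(g)^{-1})(\Psi^{-1}(\xi))=W(A)(g^{-1}\cdot\Psi^{-1}(\xi)),
\]
the last step being the covariance established in Proposition \ref{prop:Wswc}; then Proposition \ref{symbpi}(4) applied to $g^{-1}$ gives $g^{-1}\cdot\Psi^{-1}(\xi)=\Psi^{-1}(\Ad^{\ast}(g^{-1})\xi)$, so the right-hand side is exactly ${\mathcal W}(A)(g^{-1}\cdot\xi)$. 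Traciality then follows by the change of variables induced by $\Psi$: for trace-class $A,B$,
\[
\int_{{\mathcal O}(\xi_0)}{\mathcal W}(A)(\xi){\mathcal W}(B)(\xi)\,d\mu(\xi)=\int_{{\mathbb C}^n\times o(\phi_0)}W(A)(z,\phi)W(B)(z,\phi)\,d\mu_{\lambda}(z)\,d\nu(\phi)=\Tr(AB),
\]
where the first equality is the definition of $\mu$ as a pushforward and the second is the traciality of $W$ from Proposition \ref{prop:Wswc}.

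The only genuine subtlety — the step I would be most careful about — is the measure-theoretic bookkeeping: confirming that $\mu_{\lambda}\otimes\nu$ is truly $G$-invariant, so that its pushforward $\mu$ is a $G$-invariant measure on the coadjoint orbit against which the traciality integral is computed. Everything else is a formal transport of structure along the equivariant diffeomorphism $\Psi$, requiring no representation-theoretic input beyond Propositions \ref{prop:Wswc} and \ref{symbpi}.
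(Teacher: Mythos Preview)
Your proposal is correct and follows exactly the route the paper indicates: the paper gives no proof beyond calling the result an immediate consequence of Proposition \ref{prop:Wswc}, and your argument is precisely the natural unfolding of that claim, transporting each defining property along the equivariant diffeomorphism $\Psi$ of Proposition \ref{symbpi}. The only thing you add beyond the paper is the explicit identification of the invariant measure on ${\mathcal O}(\xi_0)$ as the pushforward $\Psi_{\ast}(\mu_{\lambda}\otimes\nu)$, which is the correct and necessary bookkeeping the paper leaves implicit.
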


We finish by giving an explicit expression for $W(d\pi(X)$, $X\in {\mathfrak g}$ in the spirit of Proposition \ref{symbpi}.

\begin{proposition} For each $X=(v,c,A)\in {\mathfrak g}$ and each
$(z,\phi)\in {\mathbb C}^n\times o(\phi_0)$, we have 
\begin{equation*} W(d\pi (X))(z,\phi)=i\lambda c+\frac{i}{2}\left( {\bar v}z+ v{\bar z}\right)
-\frac{1}{2\lambda}{\bar z}(Az)+\frac{1}{2\lambda}\Tr(A)
+w_1(d\rho(A))(\varphi).\end{equation*}
\end{proposition}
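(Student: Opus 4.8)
The plan is to reduce everything to the Heisenberg factor by exploiting the tensor decomposition of $d\pi(X)$ from Proposition \ref{redpi} together with the linearity of $W$ and its multiplicativity on elementary tensors. Writing $X=(v,c,A)$, Proposition \ref{redpi} gives
\begin{equation*}d\pi(X)=\bigl(d\pi_0(v,c)+d\tau(A)\bigr)\otimes\id_V+\id_{{\mathcal H}_0}\otimes d\rho(A),\end{equation*}
so by the definition of $W$ on elementary tensors,
\begin{equation*}W(d\pi(X))(z,\phi)=W_0\bigl(d\pi_0(v,c)+d\tau(A)\bigr)(z)\,w_1(\id_V)(\phi)+W_0(\id_{{\mathcal H}_0})(z)\,w_1(d\rho(A))(\phi).\end{equation*}
First I would record the two normalizations that collapse the spurious factors. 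We have $W_0(\id_{{\mathcal H}_0})(z)=1$, which is the case $p=q=0$ of Proposition \ref{prop:diff0}; and $w_1(\id_V)=1$, which $w_1$ inherits from $s_1(\id_V)=1$ (Proposition \ref{proprpetitber}): indeed the positive part $P=(s_1^{\ast}s_1)^{1/2}$ of $s_1$ is $K$-equivariant by the covariance in Proposition \ref{proprpetitber}, hence preserves the isotypic components, and since $\id_V$ spans the $K$-invariants $\mathbb{C}\,\id_V$ of $\End(V)$ (Schur, $\rho$ irreducible) it is an eigenvector of $P$, so $w_1(\id_V)$ is constant and the normalization of $\nu$ fixes it to $1$. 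These two facts produce the final summand $w_1(d\rho(A))(\phi)$ and reduce the problem to evaluating $W_0(d\pi_0(v,c)+d\tau(A))(z)$.

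For the $d\pi_0(v,c)$ part I would invoke the adapted-correspondence identity established at the close of Section \ref{sec:3}, namely $W_0(d\pi_0(Y))(z)=i\langle\Phi_\lambda(z),Y\rangle$ for $Y\in{\mathfrak g}_0$. Taking $Y=(v,c)$ and unwinding $\Phi_\lambda(z)=(-iz,\lambda)_\ast$ through the pairing $\langle(\alpha,\gamma)_\ast,(v,c)\rangle=\omega(\alpha,v)+\gamma c$ gives $\omega(-iz,v)=\tfrac12(\bar v z+v\bar z)$, whence $W_0(d\pi_0(v,c))(z)=i\lambda c+\tfrac{i}{2}(\bar v z+v\bar z)$. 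This is exactly the first block of the asserted formula.

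The heart of the argument is $W_0(d\tau(A))(z)$. From $(\tau(k)f_0)(z)=f_0(k^{-1}z)$ one differentiates to obtain the first-order operator $d\tau(A)=-\sum_{j,k}A_{jk}\,z_k\,\tfrac{\partial}{\partial z_j}$, a linear combination of the operators $z_k(\tfrac{\partial}{\partial z})^{e_j}$ to which Proposition \ref{prop:diff0} applies directly. For an off-diagonal pair $j\neq k$ only the $m=0$ term survives, giving $W_0(z_k\partial_{z_j})(z)=\tfrac{1}{2\lambda}z_k\bar z_j$; for a diagonal operator the sum has two terms and yields $W_0(z_j\partial_{z_j})(z)=\tfrac{1}{2\lambda}z_j\bar z_j-\tfrac12$. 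Summing against $-A_{jk}$, the quadratic pieces assemble into $-\tfrac{1}{2\lambda}\sum_{j,k}A_{jk}z_k\bar z_j=-\tfrac{1}{2\lambda}\bar z(Az)$, while the constant $-\tfrac12$ attached to each diagonal operator accumulates into the trace term. Collecting the three contributions yields the claimed expression. I expect the last constant to be the main obstacle: it is the symmetric-ordering defect of $W_0$ relative to the Berezin symbol $S_0$ (compare Proposition \ref{symbpi}, whose analogous formula carries no such term), and pinning down its coefficient requires the careful two-term evaluation of $W_0(z_j\partial_{z_j})$ rather than the single-term off-diagonal case.
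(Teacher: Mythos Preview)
Your argument is essentially the paper's own: decompose $d\pi(X)$ via Proposition \ref{redpi} and evaluate $W_0(d\pi_0(v,c))$ and $W_0(d\tau(A))$ term by term through Proposition \ref{prop:diff0}, the only additions being your explicit justification of $w_1(\id_V)=1$ (which the paper uses silently) and your appeal to the adapted-Weyl identity for the Heisenberg piece where the paper cites Proposition \ref{prop:diff0} directly. One small point worth flagging: your correct diagonal evaluation $W_0(z_j\partial_{z_j})=\tfrac{1}{2\lambda}|z_j|^2-\tfrac12$, summed against $-A_{jj}$, actually produces the constant $\tfrac12\Tr(A)$ rather than the $\tfrac{1}{2\lambda}\Tr(A)$ printed in the statement; the paper's proof contains the same arithmetic slip in the displayed line $\sum_i a_{ii}=\tfrac{1}{2\lambda}\Tr(A)$, so your computation is right even though it does not literally ``yield the claimed expression.''
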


\begin{proof} Let $X=(v,c,A)\in {\mathfrak g}$. Recall that (Proposition
\ref{redpi})
\begin{equation*}d\pi(X)=(d\pi_0(v,c)+d\tau (A))\otimes \id_V+\id_{{\mathcal H}_0}\otimes d\rho(A).\end{equation*}
This implies that
\begin{equation}\label{eq:Wdpi} W(d\pi(X))(z,\phi)=W_0(d\pi_0(v,c))(z)+
W_0(d\tau(A))(z)+w_1(d\rho(A))(\phi).\end{equation}

But by Proposition \ref{prop:diff0} we have
\begin{equation*}W_0(d\pi_0(v,c))(z)=i\lambda c+\frac{i}{2}\left( {\bar v}z+ v{\bar z}\right).\end{equation*}
Moreover, writing $A=(a_{ij})$, we get 
\begin{equation*}(d\tau(A)f_0)(z)=-df_0(z)(Az)=-\sum_{ij}a_{ij}z_j\frac{\partial f_0}{\partial z_i}
\end{equation*} for each $f_0\in {\mathcal H}$. Then, applying
Proposition \ref{prop:diff0} again we get
\begin{align*}W_0(d\tau(A))(z)&=-\frac{1}{2\lambda}\sum_{ij}a_{ij}{\bar z}_iz_j+\sum_ia_{ii}\\&=-\frac{1}{2\lambda}{\bar z}(Az)+\frac{1}{2\lambda}\Tr(A).
\end{align*}

Hence, by replacing in Eq. \ref{eq:Wdpi}, we obtain the desired result.
\end{proof}

Note that if $K\subset SU(n)$, one has $\Tr(A)=0$ for each $A\in {\mathfrak k} $ and the preceding formula for $W_0(d\pi(X))$ is very close to the formula
for $S(d\pi(X))$, see Proposition \ref{symbpi}.

\end{document}